\journal{Linear Algebra and its Applications}
\begin{document}

\begin{frontmatter}

\title{On $2\times 2$ Tropical Commuting Matrices}

\author{Yangxinyu Xie}
\address{Department of Mathematics, University of Texas at Austin, Speedway 2515 Stop C1200, Austin, TX, 78712, USA. }
\ead{yx4247@utexas.edu}

\begin{abstract}
This paper investigates the geometric properties of a special case of the two-sided system given by $2 \times 2$ tropical commuting constraints. Given a finite matrix $A \in \R^{2\times 2}$, the paper studies the extremals of the tropical polyhedral cone generated by the entries of matrices $B$ such that $A \otimes B = B \otimes A$ and proposes a criterion to test whether two $2\times 2$ matrices commute in max linear algebra.
\end{abstract}

\begin{keyword}
Commuting matrices\sep Tropical geometry\sep Tropical algebra
\end{keyword}

\end{frontmatter}


\section{Introduction}

In the max-linear system, we define the \textit{tropical semiring} $(\Rbar, \oplus, \otimes)$ by $\Rbar := \R \cup \{\ninf\}, a \oplus b := \max(a,b), a \otimes b := a + b$ with the additive identity $\ninf$ and the multiplicative identity $0$. The analogue of classical linear algebra in tropical setting is readily extended using the max-plus operations. That is, given matrices $A = (a_{ij}), B = (b_{ij}) \in \Rbar^{n\times n}, i,j \in \n$, we have $(A \otimes B)_{ij} = \bigoplus_{k}(A_{ik} \otimes B_{kj}) = \max_k(A_{ik} + B_{kj})$. Tropical linear algebra has been studied for a wide range of applications, such as scheduling problems \cite{butkovic_2010}, discrete event systems \cite{baccelli92}, control theory \cite{COHEN1999}, statistical inference \cite{PS04} and pairwise ranking \cite{Tran11}. Researchers have been investigating the properties of tropical commuting matrices from different approaches. Algebraically, it has been shown that any two commuting matrices have a common eigenvector \cite{butkovic_2010, SLS12}.
Earlier work has also unfolded some of the tropical analogues of the classical commuting matrices, including the Frobenius normal forms \cite{KSS10}, rank functions and subgroups \cite{IJK12}. However, the question of when two matrices commute remains a mystery. No general algebraic or geometric characterisation of the two matrices has been discovered or proven. Investigations in special subsets of commuting matrices can be found in literature. \cite{LP12} manifests that the space spanned by all matrices commuting with a given normal matrix $A$ is a finite union of alcoved polytopes \cite{Lam2007} and \cite{MT15} shows that for two Kleene Stars $A$ and $B$, if $A \oplus B$ is also a Kleene star, then $A$ and $B$ commute.

To unravel the simplest situation where we are given a matrix $A = (a_{i,j}) \in \R^{2\times 2}, i,j \in \{1, 2\}$ with finite entries $(a_{ij} > \ninf)$, we observe that for a matrix $B \in (b_{i,j}) \Rbar^{2\times 2},  i,j \in \{1, 2\}$ to commute with $A$, it must satisfy the following set of equations
\begin{equation}
    \label{eq: two sided complete system}
	\begin{aligned}
		(A\otimes B)_{11} = (a_{11} \otimes b_{11})\oplus(a_{12} \otimes b_{21}) &= (a_{11} \otimes b_{11}) \oplus (a_{21} \otimes b_{12}) = (B\otimes A)_{11}\\
		(A\otimes B)_{12} = (a_{11} \otimes b_{12})\oplus(a_{12} \otimes b_{22}) &= (a_{12} \otimes b_{11}) \oplus (a_{22} \otimes b_{12}) = (B\otimes A)_{12}\\
		(A\otimes B)_{21} = (a_{21} \otimes b_{11})\oplus(a_{22} \otimes b_{21}) &= (a_{11} \otimes b_{21})\oplus (a_{21} \otimes b_{22}) = (B\otimes A)_{21}\\
		(A\otimes B)_{22} = (a_{21} \otimes b_{12})\oplus(a_{22} \otimes b_{22}) &= (a_{12} \otimes b_{21})\oplus(a_{22} \otimes b_{22}) = (B\otimes A)_{22}
	\end{aligned}
\end{equation}
Equivalently, we can write this set of equations as a tropical two sided system
\begin{equation}
    \label{eq: two sided system}
    C\otimes x = D\otimes x
\end{equation}
where $x := (b_{11}, b_{12}, b_{21}, b_{22})$ and 
\begin{equation*}
	C = 
		\begin{bmatrix}
		a_{11} & \ninf & a_{12} & \ninf\\
		\ninf & a_{11} & \ninf & a_{12}\\
		a_{21} & \ninf & a_{22} & \ninf\\
		\ninf & a_{21} & \ninf & a_{22}\\
	\end{bmatrix}
	 \quad 
	 D =
	 \begin{bmatrix}
		a_{11} & a_{21} & \ninf & \ninf\\
		a_{12} & a_{22} & \ninf & \ninf\\
		\ninf & \ninf & a_{11} & a_{21}\\
		\ninf & \ninf & a_{12} & a_{22}\\
	\end{bmatrix}
\end{equation*}

It has been observed that the solution set of a tropical two sided system can be finitely generated \cite{butkovic1984elimination, gaubert1992theorie, gaubert1997methods}. Several algorithms have been developed to give explicit descriptions of the solution set of a tropical two sided system \cite{butkovic1984elimination, lorenzo2011algorithm, gaubert2012tropical, AGG13}. However, none of the existing algorithms runs in polynomial time. Some partial solutions of the tropical two sided systems are also discussed in \cite{butkovic_2010,SERGEEV20111758, JONES19}.

Let $K$ denote the solution set of the system (\ref{eq: two sided system}),
\begin{equation*}
    K := \{x: x \in \Rbar^4, C\otimes x = D\otimes x \}.
\end{equation*}
In this paper, we observe that $K$ is a finitely generated tropical polyhedral cone. In particular, we characterise all matrices $B \in \Rbar^{2\times 2}$ that commute with a given finite matrix $A \in \R^{2\times 2}$ by describing the basis of $K$.

\begin{thm}
\label{thm: main}
	Let $K$ be defined above. Then the basis of $K$ consists of at most 6 vectors.
\end{thm}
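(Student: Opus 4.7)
The plan is to exhibit an explicit list of at most six candidate generators for $K$ and then verify, through a polyhedral case analysis of the system (\ref{eq: two sided complete system}), that every element of $K$ is a tropical combination of them.

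First, I would identify natural candidates. Among the obvious members of $K$ are the tropical identity $(0, \ninf, \ninf, 0)$, the vector $(a_{11}, a_{12}, a_{21}, a_{22})$ representing $A$ itself, the tropical square $A^{\otimes 2}$, and certain rank-one vectors whose off-diagonal entries realise the ``balance'' $a_{12} \otimes b_{21} = a_{21} \otimes b_{12}$. Checking that each such candidate satisfies (\ref{eq: two sided complete system}) is a direct substitution, so this step is mechanical.

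Second, I would decompose $K$ into polyhedral pieces based on the four equations. Equations 1 and 4 of (\ref{eq: two sided complete system}) each reduce to the same dichotomy: either the diagonal term $a_{ii} \otimes b_{ii}$ dominates both sides, or the balance $a_{12} \otimes b_{21} = a_{21} \otimes b_{12}$ holds. This yields up to four pieces, and within each piece equations 2 and 3 become ordinary piecewise-linear constraints on $(b_{11}, b_{12}, b_{21}, b_{22})$. Each piece is then a classical polyhedral cone in $\R^4$, so its extremals can be enumerated by inspection. The $1 \leftrightarrow 2$ symmetry, induced by simultaneously swapping rows and columns of $A$ and $B$, cuts the casework roughly in half.

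Finally, I would match the enumerated vertices from each piece against the candidates of the first step, discard vertices that are tropical sums of simpler generators, and check that the surviving list has length at most six. The main obstacle is the combinatorial bookkeeping: \emph{a priori} several extremal candidates appear in each of the four pieces, so the bound requires showing that most candidates either coincide across pieces or are redundant. I expect this to force a further split on a tropical analogue of the determinant, for example on the sign of $(a_{11} + a_{22}) - (a_{12} + a_{21})$, with the six-generator bound verified separately in each sub-case. Controlling this final count, rather than producing some finite bound, is where the work lies.
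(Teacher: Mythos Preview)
Your plan shares the paper's broad shape---exhibit candidates, verify membership, then cover $K$ via a case analysis keyed to the balance condition $a_{12}\otimes b_{21}=a_{21}\otimes b_{12}$---but the organizing split differs in an important way. The paper does \emph{not} branch on the tropical determinant $(a_{11}+a_{22})-(a_{12}+a_{21})$; its primary trichotomy is simply $a_{11}>a_{22}$, $a_{11}<a_{22}$, $a_{11}=a_{22}$ (Lemmas~\ref{lem: a11 > a22}--\ref{lem: a11 = a22}). Because this is a condition on $A$ alone, it collapses equations~2 and~3 of~(\ref{eq: two sided complete system}) before any case analysis in $B$ begins: e.g.\ when $a_{11}>a_{22}$ the right side of equation~2 reduces to the single term $a_{12}+b_{11}$. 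Only after this simplification does the paper invoke your balance/dominance dichotomy, and the extremals then fall out with almost no bookkeeping---four in the strict cases, six when $a_{11}=a_{22}$. Your plan postpones (or misidentifies) this reduction, which is why you anticipate the hard part as ``controlling the final count''; with the correct split that difficulty largely disappears.

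The candidate list is the weaker part of your proposal. The actual extremals are not $A$, $A^{\otimes 2}$, or rank-one matrices in any natural sense: in the strict cases one finds vectors such as $(0,\alpha_1,\ninf,0)$ with $\alpha_1=\min(a_{12}-a_{11},\,a_{22}-a_{21})$, and the remaining generator is $A$ with one \emph{diagonal} entry overwritten by $\ninf$ (namely $(a_{11},a_{12},a_{21},\ninf)$ or $(\ninf,a_{12},a_{21},a_{22})$ depending on the case). These do not appear on your list and would be hard to guess in advance; in the paper they emerge from the reduced equations rather than from an \emph{a priori} pool of candidates. So the mechanical substitution step you describe would not in fact produce the right generators, and the matching step at the end would fail unless you first carry out the $a_{11}$ versus $a_{22}$ reduction.
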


We prove Theorem \ref{thm: main} by showing the following three cases:

\begin{lem}
    \label{lem: a11 > a22}
    Let $A = (a_{i,j}) \in \R^{2\times 2}, i,j \in \{1,2\}$ be a finite matrix with $a_{11} > a_{22}$. Then the set $\{\beta_1, \beta_2, \beta_3, \beta_4\}$ with
    \begin{equation*}
        \beta_1 = \begin{bmatrix}
		    0 \\
		    \ninf\\
		    \ninf\\
		    0
	    \end{bmatrix}
    	\quad
    	\beta_2 = \begin{bmatrix}
		    0 \\
		    \alpha_1\\
		    \ninf\\
		    0
	    \end{bmatrix}
    	\quad
    	\beta_3 = \begin{bmatrix}
		    0 \\
		    \ninf\\
		    \alpha_2\\
		    0
	    \end{bmatrix}
    	\quad
    	\beta_4 = \begin{bmatrix}
		    a_{11}\\
		    a_{12}\\
		    a_{21}\\
		    \ninf
	    \end{bmatrix},
    \end{equation*}
    where $\alpha_1 = \min(a_{12} - a_{11}, a_{22} - a_{21}), \alpha_2 = \min(a_{21} - a_{11}, a_{22} - a_{12})$, forms a basis of $K$.
\end{lem}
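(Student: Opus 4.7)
The plan is to establish three facts: (i) each $\beta_i$ lies in $K$, (ii) every $x \in K$ can be written as a tropical combination $\bigoplus_i \lambda_i \otimes \beta_i$, and (iii) each $\beta_i$ is extreme, i.e.\ not a combination of the other three. Step (i) is a direct computation of $A \otimes B_i$ and $B_i \otimes A$ for each candidate $B_i$. For $\beta_1$ (the tropical identity) this is immediate; for $\beta_4$, the hypothesis $a_{11} > a_{22}$ forces the diagonal products to dominate the off-diagonal terms, so both sides of (\ref{eq: two sided complete system}) collapse to the same max; for $\beta_2$ and $\beta_3$, the definitions $\alpha_1 = \min(a_{12}-a_{11}, a_{22}-a_{21})$ and $\alpha_2 = \min(a_{21}-a_{11}, a_{22}-a_{12})$ are engineered precisely so that every product involving an $\alpha_j$ is dominated by a corresponding $a_{22}$ or diagonal contribution.

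The bulk of the proof is step (ii). I first establish an auxiliary inequality: for any $x \in K$, $b_{11} \ge b_{22}$. Indeed, assuming $b_{22} > b_{11}$, equation $(1,2)$ in (\ref{eq: two sided complete system}) forces $a_{11}+b_{12} \le a_{12}+b_{22} = a_{22}+b_{12}$, hence $a_{11} \le a_{22}$, contradicting the hypothesis. Given this, I split into two main cases. When $b_{11} > b_{22}$, equations $(1,2)$ and $(2,1)$ pin down $b_{12} = a_{12} - a_{11} + b_{11}$ and $b_{21} = a_{21} - a_{11} + b_{11}$ exactly (any other configuration collapses to $a_{11} = a_{22}$), so $x = b_{22} \otimes \beta_1 \oplus (b_{11}-a_{11}) \otimes \beta_4$ with $\lambda_2 = \lambda_3 = \ninf$. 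When $b_{11} = b_{22} =: b$, I split further on whether $a_{12}+b_{21} = a_{21}+b_{12}$: in the aligned subcase, $(1,2)$ gives $b_{12} \le a_{12}-a_{11}+b$, and one takes $\lambda_4 = b_{12}-a_{12} = b_{21}-a_{21}$, $\lambda_1 = b$, $\lambda_2 = \lambda_3 = \ninf$; in the misaligned subcase, equations $(1,1)$ and $(2,2)$ force $b_{12} \le \alpha_1 + b$ and $b_{21} \le \alpha_2 + b$, and one takes $\lambda_4 = \ninf$, $\lambda_1 = b$, $\lambda_2 = b_{12} - \alpha_1$, $\lambda_3 = b_{21} - \alpha_2$.

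Step (iii) follows from the $\ninf$-patterns of the coordinates: $\beta_4$ is the unique generator with a $\ninf$ in the fourth coordinate, so any combination omitting $\beta_4$ has fourth entry $\ninf$ unmatched to its finite first entry; each of $\beta_1, \beta_2, \beta_3$ is similarly identified by its distinctive pattern of finite and $\ninf$ entries in coordinates two and three, which cannot be reproduced from the remaining three generators without introducing spurious finite entries elsewhere. The principal obstacle is the case analysis in step (ii), particularly the degenerate equality $b_{11} = b_{22}$, where the decomposition is non-canonical and the correct choice of scalars depends on whether $a_{12}+b_{21}$ equals $a_{21}+b_{12}$; ensuring completeness of the subcases and verifying that the proposed $\lambda_i$ satisfy the required inequalities without overshooting any coordinate is the most delicate part of the argument.
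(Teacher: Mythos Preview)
Your proposal is correct and follows essentially the same strategy as the paper: verify the $\beta_i$ lie in $K$, carry out a case analysis to show $K\subseteq\operatorname{span}(S)$, and check extremality. The only structural difference is the order of the case split: the paper branches first on whether $a_{21}+b_{12}=a_{12}+b_{21}$ and then on $b_{11}$ versus $b_{22}$, whereas you first prove the auxiliary inequality $b_{11}\ge b_{22}$ and branch on that, which arguably makes the logic a bit cleaner and lets you write down the coefficients $\lambda_i$ explicitly. One small imprecision to fix: in the misaligned subcase with $b_{11}=b_{22}=b$, equations $(1,1)$ and $(2,2)$ alone give only $b_{12}\le a_{22}-a_{21}+b$ and $b_{21}\le a_{22}-a_{12}+b$; you also need the reduced equations $(1,2)$ and $(2,1)$ to obtain $b_{12}\le a_{12}-a_{11}+b$ and $b_{21}\le a_{21}-a_{11}+b$, and hence the full bounds $b_{12}\le\alpha_1+b$, $b_{21}\le\alpha_2+b$.
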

In other words, given that $a_{11} > a_{22}$, a matrix $B$ that commutes with $A$ takes the form
\begin{equation}
    B = \Big(\lambda_1 \otimes\begin{bmatrix}
		    0 & \ninf\\
		    \ninf & 0\\
	    \end{bmatrix}\Big) \oplus 
	    \Big(\lambda_2 \otimes\begin{bmatrix}
		    0 & \alpha_1\\
		    \ninf & 0\\
	    \end{bmatrix}\Big) \oplus 
	    \Big(\lambda_3 \otimes\begin{bmatrix}
		    0 & \ninf\\
		    \alpha_2 & 0\\
	    \end{bmatrix}\Big) \oplus 
	    \Big(\lambda_4 \otimes\begin{bmatrix}
		    a_{11} & a_{12}\\
		    a_{21} & \ninf \\
	    \end{bmatrix}\Big)
\end{equation}
where $\lambda_i \in \Rbar, i = 1,2,3,4$. By symmetry, as for $a_{11} < a_{22}$, we have the following lemma. 
\begin{lem}
    \label{lem: a11 < a22}
    Let $A = (a_{i,j}) \in \R^{2\times 2}, i,j \in \{1,2\}$ be a finite matrix with $a_{11} < a_{22}$. Then the set $\{\beta_1, \beta_2, \beta_3, \beta_4\}$ with
    \begin{equation*}
        \beta_1 = \begin{bmatrix}
		    0 \\
		    \ninf\\
		    \ninf\\
		    0
	    \end{bmatrix}
    	\quad
    	\beta_2 = \begin{bmatrix}
		    0 \\
		    \alpha_1\\
		    \ninf\\
		    0
	    \end{bmatrix}
    	\quad
    	\beta_3 = \begin{bmatrix}
		    0 \\
		    \ninf\\
		    \alpha_2\\
		    0
	    \end{bmatrix}
    	\quad
    	\beta_4 = \begin{bmatrix}
		    \ninf \\
		    a_{12}\\
		    a_{21}\\
		    a_{22}
	    \end{bmatrix},
    \end{equation*}
    where $\alpha_1 = \min(a_{12} - a_{22}, a_{11} - a_{21}), \alpha_2 = \min(a_{21} - a_{22}, a_{11} - a_{21})$, forms a basis of $K$.
\end{lem}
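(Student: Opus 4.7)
The plan is to deduce this lemma from Lemma~\ref{lem: a11 > a22} via the involution that simultaneously interchanges the two rows and the two columns of a $2\times 2$ matrix. Let
$P = \begin{bmatrix} \ninf & 0 \\ 0 & \ninf \end{bmatrix}$
be the tropical swap matrix, so that $P \otimes P = I$, and for any $M \in \Rbar^{2\times 2}$ write $M' := P \otimes M \otimes P$; concretely $M'$ is obtained from $M$ by swapping $(1,1)\leftrightarrow(2,2)$ and $(1,2)\leftrightarrow(2,1)$. Conjugating $A \otimes B = B \otimes A$ by $P$ and using $P \otimes P = I$ yields $A' \otimes B' = B' \otimes A'$, and the reverse implication follows by applying the involution once more. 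Thus $B \mapsto B'$ is a bijection between the commutant of $A$ and the commutant of $A'$. Under the hypothesis $a_{11} < a_{22}$, the matrix $A'$ satisfies $a'_{11} = a_{22} > a_{11} = a'_{22}$, so Lemma~\ref{lem: a11 > a22} applies to $A'$.

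In the vectorized form $x = (b_{11}, b_{12}, b_{21}, b_{22})^T$, the involution $B \mapsto B'$ corresponds to the coordinate reversal $\sigma : (x_1, x_2, x_3, x_4)^T \mapsto (x_4, x_3, x_2, x_1)^T$. Since tropical addition and scalar multiplication act coordinatewise, $\sigma$ is a tropical linear automorphism of $\Rbar^4$, and in particular sends a basis of a tropical polyhedral cone to a basis of its image. Hence if $\{\beta_1', \beta_2', \beta_3', \beta_4'\}$ is the basis furnished by Lemma~\ref{lem: a11 > a22} for the commutant of $A'$, then $\{\sigma\beta_1', \sigma\beta_2', \sigma\beta_3', \sigma\beta_4'\}$ is a basis of $K$. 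What remains is a routine substitution: replace every $a_{ij}$ by the corresponding $a'_{ij}$ in the expressions for $\alpha_1$ and $\alpha_2$ in Lemma~\ref{lem: a11 > a22}, apply $\sigma$ to each $\beta_i'$, and verify that the resulting four vectors coincide with the $\beta_i$ displayed in the statement of Lemma~\ref{lem: a11 < a22}.

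The only delicate point is to ensure that $\sigma$ preserves minimality of the generating set, so that no extraneous extremal ray appears when we transport the basis back to $K$; this is immediate because $\sigma$ is a bijective tropical linear map and extremality of a generator is an intrinsic property of the cone. Beyond that, no new combinatorics arises: the verification of the commuting identities and the minimality of the four vectors has already been carried out in Lemma~\ref{lem: a11 > a22}, and is simply relabelled here.
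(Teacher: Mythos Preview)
Your approach is precisely what the paper does: its entire proof of this lemma is the one-line remark that it ``can be proved similarly by symmetry,'' and your conjugation by the tropical swap matrix $P$ is exactly that symmetry made explicit. One caution about the final ``routine substitution'': carrying it out gives $\sigma\beta_3'=\beta_2$ with $\alpha_1=\min(a_{12}-a_{22},\,a_{11}-a_{21})$ as stated, but $\sigma\beta_2'=\beta_3$ with $\alpha_2=\min(a_{21}-a_{22},\,a_{11}-a_{12})$, \emph{not} the printed $\min(a_{21}-a_{22},\,a_{11}-a_{21})$; this is a typo in the statement (the Remark following Lemma~\ref{lem: a11 = a22}, which quotes these $\alpha$'s ``as in Lemma~\ref{lem: a11 < a22},'' already uses $a_{11}-a_{12}$, and a direct check that $B_{\beta_3}$ commutes with $A$ forces $\alpha_2\le a_{11}-a_{12}$ from the $(1,1)$ entry), so your argument is sound and it is the displayed formula that needs correcting.
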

The case in which $a_{11} = a_{22}$ is a bit different, especially $\beta_2$ and $\beta_3$. 
\begin{lem}
    \label{lem: a11 = a22}
    Let $A = (a_{i,j}) \in \R^{2\times 2}, i,j \in \{1,2\}$ be a finite matrix with $a_{11} = a_{22}$. Then the set $\{\beta_1, \beta_2, \beta_3, \beta_4, \beta_5, \beta_6\}$ with
    \begin{equation*}
        \beta_1 = \begin{bmatrix}
		    0 \\
		    \ninf\\
		    \ninf\\
		    0
	    \end{bmatrix}
    	\quad
    	\beta_2 = \begin{bmatrix}
		    a_{21} \\
		    a_{11}\\
		    \ninf\\
		    a_{21}
	    \end{bmatrix}
    	\quad
    	\beta_3 = \begin{bmatrix}
		    a_{12} \\
		    \ninf\\
		    a_{11}\\
		    a_{12}
	    \end{bmatrix} \\
    	\beta_4 = \begin{bmatrix}
		    a_{11} \\
		    a_{12}\\
		    a_{21}\\
		    \ninf
	    \end{bmatrix}
	    \quad
    	\beta_5 = \begin{bmatrix}
		    \ninf \\
		    a_{12}\\
		    a_{21}\\
		    a_{22}
	    \end{bmatrix}
	    \quad
    	\beta_6 = \begin{bmatrix}
		    \ninf \\
		    a_{12}\\
		    a_{21}\\
		    \ninf
	    \end{bmatrix}
    \end{equation*}
    forms a basis of $K$.
\end{lem}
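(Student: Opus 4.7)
The plan is to follow the same three-step pattern that proves Lemmas \ref{lem: a11 > a22} and \ref{lem: a11 < a22}: first confirm that each of $\beta_1,\ldots,\beta_6$ lies in $K$, then show the six vectors tropically span $K$, and finally verify that none of them is a tropical linear combination of the others. Throughout, set $c := a_{11} = a_{22}$; the collapse of the two diagonal coefficients is precisely what forces the number of generators to jump from four to six.

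For the membership step, I would substitute each $\beta_i$ directly into the four equations of (\ref{eq: two sided complete system}). After imposing $a_{11}=a_{22}=c$, the diagonal terms $c\otimes b_{11}$ and $c\otimes b_{22}$ appear symmetrically on both sides of every equation, so the checks collapse to a handful of trivial off-diagonal identities such as $a_{12}\otimes a_{21}=a_{21}\otimes a_{12}$ and $c\otimes a_{12}=c\otimes a_{12}$. This is routine but must be done once per generator, and it also serves as a sanity check on the values of the six vectors.

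For the spanning step, I would take an arbitrary $x=(b_{11},b_{12},b_{21},b_{22})\in K$ and construct explicit coefficients $\lambda_1,\ldots,\lambda_6\in\Rbar$ built from the entries of $x$; natural first guesses are $\lambda_4 := b_{11}-a_{11}$ and $\lambda_5 := b_{22}-a_{22}$, with $\lambda_1,\lambda_2,\lambda_3,\lambda_6$ tuned to recover the off-diagonal entries $b_{12}$ and $b_{21}$. Verifying coordinate by coordinate that $\bigoplus_i \lambda_i\otimes\beta_i = x$ is where the two-sided relations are actually used: after invoking $a_{11}=a_{22}$, Equations 1 and 4 of (\ref{eq: two sided complete system}) constrain the relative sizes of $a_{12}\otimes b_{21}$ and $a_{21}\otimes b_{12}$ against the diagonal terms, and a case split on which side dominates tells us whether $\beta_6$ or one of $\beta_4,\beta_5$ supplies the off-diagonal contribution in each regime.

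The main obstacle I anticipate is exactly this spanning argument. Unlike in the strict inequality cases, the degenerate equation caused by $a_{11}=a_{22}$ removes a linear constraint among the off-diagonal entries and opens a one-parameter family of solutions with $b_{11}=b_{22}=\ninf$ that forces the extra generator $\beta_6$, while the transition between $\beta_4$-dominated and $\beta_5$-dominated solutions explains the second new generator. One must carefully exhaust the cases so that every solution is recovered. Extremality, by contrast, should be the cleanest step: each $\beta_i$ is distinguished from the others either by its support pattern (the positions of $\ninf$) or, for the pairs with matching supports, by a coordinate where agreement on one entry forces strict disagreement on another; for example, $\beta_6$ is the unique generator whose first and fourth coordinates are both $\ninf$, so no positive combination of the remaining five can reproduce it.
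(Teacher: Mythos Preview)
Your plan is correct and mirrors the paper's proof closely: the paper also checks membership of each $\beta_i$ by direct substitution, then proves spanning by a case analysis (splitting first on whether $a_{21}+b_{12}=a_{12}+b_{21}$, and within each branch on the order of $b_{11}$ and $b_{22}$), and finally establishes that the set is a basis. The only minor methodological difference is in the last step: you argue independence directly from support patterns, whereas the paper invokes the minimality criterion of Proposition~\ref{prop: check extremality by minimality} to certify each $\beta_i$ as extremal and then appeals to Lemma~\ref{lem: independence} and Theorem~\ref{thm:all scaled extremals form a basis}; both routes are valid and roughly equivalent in effort.
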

In section \ref{background} we will review the definition of the \textit{tropical polyhedral cone}, as well as the properties of its scaled extremals. In section \ref{sec: main proof} we discuss the proof of of Lemma \ref{lem: a11 > a22} and Lemma \ref{lem: a11 = a22}.  Lastly, we study one approach to visualise the polyhedral cone defined in \ref{eq: two sided system} using the \textit{Barycentric coordinates} in section \ref{geo}.
\section{Background in Tropical Polyhedral Cones}
\label{background}
We denote the set of real numbers by $\R$ and define $\Rbar := \R \cup \{- \infty\}$. The \textit{tropical algebra}, typically the \textit{max-linear algebra} defines a \textit{tropical semiring} $(\Rbar, \oplus, \otimes)$ by $a \oplus b := \max(a,b), a \otimes b := a + b$. For any given $a \in \Rbar$, we have that $a \oplus (\ninf) = a$ and $a \otimes 0 = a$. The matrix addition and matrix multiplication are similar to the classical ones. Given matrices $A = (a_{ij}), B = (b_{ij}) \in \Rbar^{n\times n}, i,j \in \n$, we have $(A \oplus B)_{ij} = (A_{ij} \oplus B_{ij}) = \max(A_{ij}, B_{ij})$ and $(A \otimes B)_{ij} = \bigoplus_{k}(A_{ik} \otimes B_{kj}) = \max_k(A_{ik} + B_{kj})$. The identity matrix $I$ is hence $I = (w_{ij})$ such that $w_{ij} = 0$ if $i = j$ and $w_{ij} = \ninf$ otherwise.

Let $K$ be a set of vectors in space $\Rbar^d$. We say that a vector $u \in \Rbar^d$ is a {\it tropical linear combination} of $K$ if $u = \bigoplus_{w \in K}\lambda_w w, \lambda \in \Rbar$ where only finitely number of $\lambda_w > \ninf$. We use $\spn(K)$ to denote the set of all tropical linear combinations of $K$.
\begin{defn}
    A set $K \subseteq \Rbar^d$ is said to be a \textit{tropical polyhedral cone} if $K = \spn(K)$. If there is a subset $S \subseteq K$ such that $\spn(S) = K$, we call $S$ a set of \textit{generators} for $K$.
\end{defn}
For the purpose of our discussion, we restrict our attention to finitely generated tropical polyhedral cones. That is, there is a set $S$ of finite order and $\spn(S) = K$. A \textit{tropical linear halfspace} is the set of vectors $x$ satisfying $\mathbf{c}^T \otimes x \le \mathbf{d}^T\otimes x$ where $\mathbf{c}, \mathbf{d} \in \Rbar^{d}$. The following theorem gives an equivalent definition of finitely generated tropical polyhedral cones. We omit the proof here.

\begin{thm}[\cite{gaubert1992theorie, gaubert1997methods}; see \cite{gaubert2011minimal}, Theorem 1]
    A tropical polyhedral cone is finitely generated if and only if it is the intersection of finitely many half-spaces.
\end{thm}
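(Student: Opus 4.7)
The plan is to prove the two implications separately, mirroring the classical Minkowski--Weyl theorem.

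For the direction ``finitely generated implies intersection of finitely many half-spaces'', write $K=\spn(v_1,\ldots,v_m)$. I would first establish a tropical separation (Farkas-type) statement: if $u\notin K$, there exist $\mathbf{c},\mathbf{d}\in\Rbar^d$ with $\mathbf{c}^T\otimes v_i \le \mathbf{d}^T\otimes v_i$ for every $i$ but $\mathbf{c}^T\otimes u > \mathbf{d}^T\otimes u$. Monotonicity and max-plus distributivity then propagate the inequality to every tropical combination of the $v_i$, so $K$ sits inside the corresponding half-space while $u$ does not. To convert an a priori infinite family of separating inequalities into a finite one, I would classify the pairs $(\mathbf{c},\mathbf{d})$ by their support pattern $(S,T)\subseteq\{1,\ldots,d\}^2$; for each of the finitely many patterns, the tightest inequality $\bigoplus_{k\in S} c_k \otimes x_k \le \bigoplus_{k\in T} d_k\otimes x_k$ valid on all of $v_1,\ldots,v_m$ is determined explicitly by slack computations on the generators, and intersecting this finite catalogue of tightest half-spaces recovers $K$.

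For the converse, write $K=\bigcap_{i=1}^N H_i$ with $H_i=\{x:\mathbf{c}_i^T\otimes x \le \mathbf{d}_i^T\otimes x\}$ and induct on $N$. The base case requires showing a single half-space $H=\{x:\mathbf{c}^T\otimes x\le \mathbf{d}^T\otimes x\}$ is finitely generated; here I would run a Butkovi\v{c}--Heged\H{u}s-style elimination, partitioning the coordinate indices by the sign of $d_k-c_k$ and producing finitely many explicit generators of two types: rescaled unit vectors $e_k$ on coordinates where $d_k\ge c_k$, and ``pairwise cross'' vectors $\alpha\otimes e_k \oplus \beta\otimes e_\ell$ balancing one positive-side index $k$ against one negative-side index $\ell$. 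For the inductive step, given $K'=\spn(w_1,\ldots,w_p)$ and an additional half-space $H$, I would build generators of $K'\cap H$ by keeping each $w_j$ already in $H$ and adding, for every pair $(w_j,w_k)$ lying on opposite sides of $H$, the unique tropical combination that lands on the boundary of $H$.

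The principal obstacle is the elimination step in the converse, specifically verifying that the finite enlargement produced actually spans \emph{every} point of $K'\cap H$ rather than merely sitting inside it. The argument requires tracking, for an arbitrary $x=\bigoplus_j \lambda_j\otimes w_j\in K'\cap H$, which indices achieve the maxima on both sides of the defining inequality, and then replacing an ``offending'' summand whose unmodified contribution would violate the inequality by its boundary-cross counterpart while preserving the value of $x$; this is the max-plus analogue of Fourier--Motzkin elimination, and the arithmetic case analysis is what makes the proof long. The forward direction becomes clean once tropical separation is in hand, although one could alternatively deduce it via tropical duality from the converse applied to an auxiliary cone in $\Rbar^{d+1}$.
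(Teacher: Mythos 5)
The paper does not actually prove this theorem: it is quoted from the cited references with the proof explicitly omitted, so there is no internal argument to compare yours against. That said, your outline is essentially the standard proof from those references. The converse direction via tropical Fourier--Motzkin / double description --- intersect a finitely generated cone with one half-space at a time, keep the generators already satisfying the inequality, and add for each satisfying/violating pair $(g,h)$ the boundary combination $(\mathbf{c}^T\otimes h)\otimes g\oplus(\mathbf{d}^T\otimes g)\otimes h$ --- is exactly the elimination argument of Gaubert and of Butkovi\v{c}--Heged\H{u}s, and the lemma you state for the inductive step is true; as you correctly identify, the substance is verifying that the enlarged set spans all of $K'\cap H$, which is done by tracking which indices attain the maxima on each side and swapping an offending summand for its boundary counterpart.

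One claim in your forward direction does not hold as stated: for a fixed support pattern $(S,T)$ with $|T|\ge 2$ there is in general no single ``tightest'' inequality valid on $v_1,\ldots,v_m$. The set of valid pairs $(\mathbf{c},\mathbf{d})$ with a prescribed support is itself a tropical cone and may have several incomparable minimal elements, so the number of half-spaces needed is not bounded by the number of support patterns and your ``slack computation'' does not produce a well-defined catalogue. The clean repair is the duality route you mention only in passing: the polar cone $\{(\mathbf{c},\mathbf{d})\in\Rbar^{2d}:\mathbf{c}^T\otimes v_i\le\mathbf{d}^T\otimes v_i \text{ for all } i\}$ is cut out by finitely many tropical inequalities, hence is finitely generated by the already-established converse direction, and the separation theorem (applicable because finitely generated tropical cones are closed) shows that the half-spaces corresponding to its finitely many generators intersect exactly in $K$. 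With that substitution your plan is a complete and correct route to the theorem.
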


In other words, a finitely generated tropical polyhedral cone is the set of vectors $x$ satisfying $C \otimes x \le D\otimes x$ where $C, D \in \Rbar^{m\times d}$ for some integer $m$. For two matrices
\begin{equation}
	A =  \begin{bmatrix}
		a_{11} & a_{12}\\
		a_{21} & a_{22}\\
	\end{bmatrix}, a_{i,j} \in \R, i,j \in \{1,2\}, \quad
	B =  \begin{bmatrix}
		b_{11} & b_{12}\\
		b_{21} & b_{22}\\
	\end{bmatrix}, b_{i,j} \in \Rbar, i,j \in \{1,2\}, 
\end{equation}
If $A$ and $B$ commute, we obtain a two sided system $C\otimes x = D\otimes x, x := (b_{11}, b_{12}, b_{21}, b_{22})$ as (\ref{eq: two sided system}). $C\otimes x = D\otimes x$ gives a special type of tropical polyhedral cone -- it is the intersection of two polyhedral cones $C\otimes x \le D\otimes x$ and $C\otimes x \ge D\otimes x$.

Let $K$ be a tropical polyhedral cone. An element $u \in K$ is called an {\it extremal} in $K$ if for any two elements $v,w \in K$ such that $u = v \oplus w$, we have either $u = v$ or $u = w$. The \textit{support} of a vector $v \in \Rbar^d$, denoted by $\supp(v)$ is the ordered set of indices $j \in \{1,...,d\}$ such that $v_j > \ninf$. Let $S \subseteq \Rbar^d$ be a set of vectors. For a vector $u \in \Rbar^d$ and an index $t \in \supp(u)$ with $u_t > \ninf$, we define
\begin{equation*}
    S_u(t) := \{(u_t - v_t) \otimes v: v \in S, t \in \supp(v)\}
\end{equation*}
That is, for each vector $v \in S$ with support at $t$, we shift it linearly so that its $t$-th entry equals to $u_t$ and put it in $S_u(t)$. An element $v \in S$ is said to be \textit{minimal} if for all $u \in S, u \le v$, we have $u = v$. We stress that for a support $t$, the set $S_v(t)$ may have more than 1 minimal elements. The following proposition allows us to check extremality of a vector by its minimality in the generating set.

\begin{prop}[\cite{BSS07}, Theorem 14; \cite{butkovic_2010}, Proposition 3.3.6]
 	\label{prop: check extremality by minimality}
 	Let $K$ be a tropical polyhedral cone and $S$ be its set of generators. A vector $v$ is an extremal in $K$ if and only if there exists some $t \in \supp(v)$ such that $v$ is a minimal element in the set $S_v(t)$. In other words, for all $w \in S_v(t), w \le v$ implies $w = v$.
\end{prop}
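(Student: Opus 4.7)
The plan is to prove both directions by exploiting a single observation: if $v \in K = \spn(S)$, then a tropical expansion $v = \bigoplus_{w \in S} \lambda_w \otimes w$ exists, and for every coordinate $t \in \supp(v)$ the finite maximum $v_t = \max_w (\lambda_w + w_t)$ is attained by some generator $w^\ast$ with $t \in \supp(w^\ast)$. The shifted element $\lambda_{w^\ast} \otimes w^\ast = (v_t - w^\ast_t) \otimes w^\ast$ then lies in $S_v(t)$ and is coordinate-wise dominated by $v$.

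For the sufficient direction, assume there is $t \in \supp(v)$ such that every $w \in S_v(t)$ with $w \le v$ satisfies $w = v$. Given any decomposition $v = u_1 \oplus u_2$ with $u_i \in K$, I would expand $u_i = \bigoplus_w \lambda_{i,w} \otimes w$, set $\mu_w := \lambda_{1,w} \oplus \lambda_{2,w}$, and apply the observation at coordinate $t$ to produce a generator $w^\ast$ for which $\mu_{w^\ast} \otimes w^\ast \in S_v(t)$ and $\mu_{w^\ast} \otimes w^\ast \le v$. The hypothesis forces $\mu_{w^\ast} \otimes w^\ast = v$, and since $\mu_{w^\ast} = \lambda_{i,w^\ast}$ for some $i \in \{1,2\}$, the corresponding $u_i$ satisfies $u_i \ge \lambda_{i,w^\ast} \otimes w^\ast = v$; combined with $u_i \le v$ this yields $u_i = v$, so $v$ is extremal.

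For the necessary direction, I would argue the contrapositive. Suppose $v$ is not extremal, so that $v = v_1 \oplus v_2$ with $v_i \in K$ and $v_i \ne v$ for each $i$; then $v_i \le v$ with strict inequality at some coordinate of each. Fix an arbitrary $s \in \supp(v)$. From $v_s = \max(v_{1,s}, v_{2,s})$, at least one of the $v_i$, say $v_1$, attains $v_s$. Expanding $v_1$ in $S$ and selecting a generator $w^\ast$ that realises the max at coordinate $s$, the element $(v_s - w^\ast_s) \otimes w^\ast$ lies in $S_v(s)$ and is dominated by $v_1$. Because $v_1 \le v$ but $v_1 \ne v$, this element is bounded above by $v$ yet strictly below $v$ at some coordinate, hence is a witness that $v$ is not minimal in $S_v(s)$. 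Since $s$ was arbitrary in $\supp(v)$, this proves the contrapositive.

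The main difficulty lies in the necessary direction: a direct attempt to build a non-trivial decomposition from witnesses $w^{(t)} \in S_v(t)$ strictly below $v$ tends to collapse, because $\bigoplus_{t \in \supp(v)} w^{(t)}$ recovers $v$ itself (each $w^{(t)}$ matches $v$ at coordinate $t$). Arguing the contrapositive sidesteps this obstacle by letting the given decomposition $v = v_1 \oplus v_2$ supply the needed generators at each coordinate directly.
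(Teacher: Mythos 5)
Your sufficiency argument ($\exists\, t$ with $v$ minimal in $S_v(t)$ implies $v$ extremal) is correct, and it is essentially the paper's argument with the intermediate step through the set $K_v(t)$ inlined: merging the two expansions via $\mu_w=\lambda_{1,w}\oplus\lambda_{2,w}$ and locating the generator that attains the maximum at coordinate $t$ is exactly the mechanism the paper uses, so this half is fine.

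The other half has a genuine logical gap. What you label the ``necessary direction'' proves the implication ``$v$ not extremal $\Rightarrow$ $v$ is not minimal in $S_v(s)$ for every $s\in\supp(v)$.'' That is the contrapositive of \emph{sufficiency}, not of necessity; it is logically the same statement you already proved in your first paragraph. The necessary direction is ``$v$ extremal $\Rightarrow$ $\exists\, t$ with $v$ minimal in $S_v(t)$,'' and its contrapositive must \emph{assume} that $v$ fails to be minimal in $S_v(t)$ for every $t\in\supp(v)$ and \emph{conclude} that $v$ is not extremal. You never argue this, so the forward implication is missing entirely. Worse, the ``obstacle'' you describe in your closing paragraph is in fact the proof: if for every $t\in\supp(v)$ there is $w^{(t)}\in S_v(t)$ with $w^{(t)}\le v$, $w^{(t)}\ne v$ and $w^{(t)}_t=v_t$, then $\bigoplus_{t\in\supp(v)}w^{(t)}=v$ (it is $\le v$ coordinatewise and attains $v_t$ at each $t\in\supp(v)$), and each $w^{(t)}$ is a scalar multiple of a generator, hence lies in $K$. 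This exhibits $v$ as a finite tropical combination of elements of $K$ none of which equals $v$, which, after grouping the terms two at a time, contradicts extremality. The decomposition ``collapsing back to $v$'' is not a defect to be sidestepped; it is precisely the nontrivial decomposition that witnesses non-extremality. This is the paper's argument, with the degenerate case $|\supp(v)|=1$ handled separately since there minimality in $S_v(t)$ holds automatically.
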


\begin{proof}
        $(\Rightarrow)$ Suppose that $v$ is an extremal in $K$. Then for any $t \in \supp(v)$,
        \begin{equation*}
            v = \bigoplus_{w \in S}(\lambda_w \otimes w)
        \end{equation*}
        for finitely many $\lambda_w > \ninf$. Because $v$ is extremal, we have $v = \lambda_w \otimes w$ for some $w \in S_v(t)$ and $\lambda_w \in \Rbar$. Because $w_t= v_t$, we have $\lambda_w = 0$ and thus $w = v$. If $|\supp(v)| = 1$, then $v$ is minimal in $S_v(t)$ because for any $w \in S_v(t), w \le v$, we have $w_t = v_t$.
        
        Suppose $|\supp(v)| > 1$ and $v$ is not minimal in $S_v(t)$ for all $t \in \supp(v)$. Then for all $t \in \supp(v)$, there exists a vector $w(t) \in S_t(v)$ such that 
        \begin{equation*}
            v = \bigoplus_{t \in \supp(v)}w(t)
        \end{equation*}
        but $v$ equals none of $w(t)$. This contradicts the extremality of $v$.
        
        $(\Leftarrow)$ Let $v$ and $t \in \supp(v)$ be given such that $v$ is minimal in $S_v(t)$. We first show that $v$ is minimal in $K_v(t)$. Assume that there exists a vector $u \in K_v(t)$ such that $u \le v$. Because $u \in K_v(t)$ and $K = \spn(S)$, we have
        \begin{equation}
            \label{eq: some w is smaller than u}
            u = \bigoplus_{w \in S}(\lambda_w \otimes w)
        \end{equation}
        for finitely many $\lambda_w > \ninf$. Notice that there must be a $w \in S_v(t)$ with $w \le u$ and $w_t = u_t$. Because $v$ is minimal in $S_v(t)$, we must have $v \le w$. Therefore, it must be the case that $v = w = u$.
        
        Now, suppose that $v = x \oplus y$ for some $x, y \in K$. Then both $x \le v$ and $y \le v$ and either $x_t = v_t$ or $y_t = v_t$. Without loss of generality, we assume that $x_t = v_t$. Then $x\in K_v(t)$. As $v_t$ is minimal in $K_v(t)$, we must have $x = v$. Hence, $v$ is an extremal in $K$.
\end{proof} 
 
A set $S \subseteq \Rbar^d$ is said to be \textit{independent} if none of the elements in $S$ is a tropical linear combination of other elements in $S$ and is \textit{dependent} otherwise. We say that a vector $v$ is \textit{scaled} if its first finite entry equals $0$. That is, if $i$ is the smallest element in $\supp(v)$, then $v_i = 0$. 

\begin{lem}[\cite{BSS07}, Lemma 7; \cite{butkovic_2010}, Lemma 3.3.1]
    \label{lem: extremals are subset of generators}
    Let $K$ be a tropical polyhedral cone. Let $v$ be a scaled extremal of $K$ and $S$ be a set of scaled generators of $K$. Then $v \in S$.
\end{lem}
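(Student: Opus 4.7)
The plan is to combine the extremality of $v$ with the scaling condition, with the generator expansion of $v$ as the bridge between them. Since $v \in K = \spn(S)$, I can write
\[
v = \bigoplus_{w \in S} \lambda_w \otimes w
\]
with only finitely many $\lambda_w > \ninf$. Applying the two-term extremality condition inductively to this finite tropical sum gives that $v = \lambda_{w^*} \otimes w^*$ for a single $w^* \in S$ with $\lambda_{w^*} > \ninf$. Concretely, the induction step uses $v = (\lambda_{w_1}\otimes w_1) \oplus \bigl(\bigoplus_{j \ge 2}\lambda_{w_j}\otimes w_j\bigr)$: extremality forces $v$ to equal one of the two parts, and the induction hypothesis is applied to the second if necessary.

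Once $v = \lambda_{w^*} \otimes w^*$ is established, the scaling hypothesis pins down $\lambda_{w^*}$. Multiplying $w^*$ by $\lambda_{w^*}$ shifts every finite coordinate of $w^*$ by the same constant and preserves $\ninf$ entries, so $\supp(v) = \supp(w^*)$. Let $i$ be the smallest element of this common support. Since both $v$ and $w^*$ are scaled, $v_i = 0 = w^*_i$, and the identity $0 = v_i = \lambda_{w^*} + w^*_i = \lambda_{w^*}$ forces $\lambda_{w^*}=0$. Hence $v = w^* \in S$, as desired.

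There is no serious obstacle here; the only point that requires a little care is justifying that the two-term extremality definition really extracts a \emph{single} generator from a finite tropical sum, which is the short induction indicated above. The scaling condition then does all of the remaining work, turning the \emph{proportionality} $v \propto w^*$ into genuine \emph{equality}.
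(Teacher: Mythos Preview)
Your argument is correct and matches the paper's proof essentially line for line: expand $v$ over the generators, use extremality to reduce to $v = \lambda_{w^*}\otimes w^*$ for a single $w^*\in S$, then use the common support and the scaling convention to force $\lambda_{w^*}=0$. The only difference is that you spell out the induction behind ``extremality picks out one summand,'' which the paper leaves implicit.
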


\begin{proof}
    Because $v \in K$, 
        \begin{equation*}
            v = \bigoplus_{w \in S} \lambda_w \otimes w
        \end{equation*}
    for finitely many $\lambda_w > \ninf$. Because $v$ is an extremal in $K$, then $v = \lambda_w \otimes w$ for some $w \in S$ and some $\lambda_w > \ninf$. Hence $\supp(w) = \supp(v)$. Because both $v$ and $w$ are scaled, we must have $\lambda_w = 0$ and $v = w$. This implies $v \in S$.
\end{proof}

\begin{lem}[\cite{BSS07}, Lemma 8; \cite{butkovic_2010}, Lemma 3.3.2]
 	\label{lem: independence}
 	Let $K$ be a tropical polyhedral cone. Then any set of scaled extremals of $K$ is independent. 
\end{lem}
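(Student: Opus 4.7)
The plan is to argue by contradiction, using the extremality characterisation directly (not via Proposition \ref{prop: check extremality by minimality}, which is stronger than we need). Suppose that some set $E$ of scaled extremals of $K$ is dependent. Then by definition there exists some $v \in E$ that lies in $\spn(E \setminus \{v\})$, so
\begin{equation*}
    v = \bigoplus_{w \in E \setminus \{v\}} \lambda_w \otimes w
\end{equation*}
with only finitely many $\lambda_w > \ninf$.

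Since $v$ is an extremal of $K$, a straightforward induction on the number of terms in the above max-combination (using the defining property $u = x \oplus y \Rightarrow u = x \text{ or } u = y$) shows that $v$ must coincide with one of the summands, i.e.\ $v = \lambda_w \otimes w$ for some particular $w \in E \setminus \{v\}$ and some $\lambda_w \in \R$. In particular $\supp(v) = \supp(w)$.

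Now I would invoke the scaling hypothesis: let $i$ be the smallest index in $\supp(v) = \supp(w)$. Because both $v$ and $w$ are scaled, $v_i = w_i = 0$. The relation $v_i = \lambda_w + w_i$ then forces $\lambda_w = 0$, so $v = w$, contradicting $w \in E \setminus \{v\}$.

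The only mildly delicate step is justifying that $v = \bigoplus_{w} \lambda_w \otimes w$ with $v$ extremal forces $v$ to equal one of the summands; this is essentially the same bookkeeping used in the proof of Lemma \ref{lem: extremals are subset of generators}, applied iteratively by pairing off the maximum into $v = (\lambda_{w_1} \otimes w_1) \oplus (\text{rest})$ and using the extremal property. Everything else is immediate from scaledness and the definition of equal supports.
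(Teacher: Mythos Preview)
Your proof is correct and is essentially the paper's argument unpacked: the paper simply invokes Lemma~\ref{lem: extremals are subset of generators} (applied with $K' = \spn(E \setminus \{v\})$ and $S = E \setminus \{v\}$) to conclude directly that $v \in E \setminus \{v\}$, a contradiction. What you wrote is exactly the content of that lemma's proof carried out inline, as you yourself noted.
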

\begin{proof}
    Let $E$ be a set of scaled extremals of $K$ and $v \in E$. Let $K':= \spn(E \setminus \{v\}) \subseteq K$. Suppose that $v \in K'$. Then by Lemma \ref{lem: extremals are subset of generators}, $v \in E \setminus \{v\}$ and gives a contradiction.
\end{proof}

For a tropical polyhedral cone $K$, we say that a set $S \subseteq K$ is a \textit{basis} for $K$ if it is an independent set of generators for $K$.

\begin{thm}[\cite{wagneur1991moduloids}, Theorem 5; \cite{BSS07}, Theorem 18]
    \label{thm:all scaled extremals form a basis}
    Let $K$ be a tropical polyhedral cone and $E$ be its the set of all scaled extremals. Then $E$ is a basis for $K$.
\end{thm}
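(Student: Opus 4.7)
The plan is to separate the two defining properties of a basis, independence and spanning. Independence of $E$ is immediate: Lemma \ref{lem: independence} already establishes that any set of scaled extremals of $K$ is independent. So the only real work is to show that $E$ spans $K$, i.e., $\spn(E) = K$.

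The strategy for the spanning part is finite induction. Since $K$ is finitely generated, fix a finite scaled generating set $S$ of $K$. By Lemma \ref{lem: extremals are subset of generators}, every scaled extremal must lie in $S$, so $E \subseteq S$ and in particular $E$ is finite. I would induct on $|S \setminus E|$. The base case $|S \setminus E| = 0$ is trivial since then $S = E$ and $\spn(E) = \spn(S) = K$. For the inductive step, pick $s \in S \setminus E$; since $s$ is not extremal, Proposition \ref{prop: check extremality by minimality} tells us that for every $t \in \supp(s)$ the vector $s$ is \emph{not} minimal in $S_s(t)$, so there exists $u(t) \in S_s(t)$ with $u(t) \le s$ and $u(t) \neq s$. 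Each such $u(t)$ has the form $(s_t - v_t) \otimes v$ for some $v \in S$ with $v \neq s$ (if we had $v = s$, then $u(t) = s$). Combining these yields
\begin{equation*}
    s = \bigoplus_{t \in \supp(s)} u(t),
\end{equation*}
because on each $t \in \supp(s)$ we have $u(t)_t = s_t$, on coordinates outside $\supp(s)$ every $u(t)$ is forced to $\ninf$ by $u(t) \le s$, and the $\oplus$ cannot exceed $s$ since every summand is $\le s$. Hence $s \in \spn(S \setminus \{s\})$, so $S' := S \setminus \{s\}$ is still a finite scaled generating set of $K$ with $E \subseteq S'$ and $|S' \setminus E| = |S \setminus E| - 1$. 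Applying the inductive hypothesis to $S'$ finishes the proof.

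The step I expect to be the main obstacle is verifying carefully that the decomposition $s = \bigoplus_{t \in \supp(s)} u(t)$ actually uses only elements of $S \setminus \{s\}$, i.e., that non-minimality in $S_s(t)$ genuinely gives a witness coming from a generator distinct from $s$. This rests on the fact that the scaling factor $(s_t - v_t)$ equals $0$ when $v = s$, so a witness with $u(t) \neq s$ forces $v \neq s$. Once this is in hand, the coordinatewise check that $\bigoplus_t u(t) = s$ is routine: equality on $\supp(s)$ from $u(t)_t = s_t$, and the $\ninf$ coordinates of $s$ forcing the corresponding entries of each $u(t) \le s$ to be $\ninf$ as well. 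The induction then terminates because $S$ is finite, delivering $\spn(E) = K$ and, together with Lemma \ref{lem: independence}, the theorem.
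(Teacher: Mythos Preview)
Your proposal is correct and follows essentially the same route as the paper: establish $E\subseteq S$ via Lemma~\ref{lem: extremals are subset of generators}, then use Proposition~\ref{prop: check extremality by minimality} to show that any non-extremal $s\in S$ can be written as $\bigoplus_{t\in\supp(s)} u(t)$ with each $u(t)$ coming from $S\setminus\{s\}$, so $S\setminus\{s\}$ still generates; independence is Lemma~\ref{lem: independence}. The only difference is cosmetic: you spell out the finite induction on $|S\setminus E|$ and the check that $u(t)\neq s$ forces the underlying generator $v\neq s$, whereas the paper compresses this into a single ``consequently, $E$ generates $K$''.
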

\begin{proof}
    We first show that $E$ generates $K$. Let $S$ be a set of scaled generators of $K$. By Lemma \ref{lem: extremals are subset of generators}, we have $E \subseteq S$. Let $u \in S, u \not \in E$, we show that $S \setminus \{u\}$ is still a set of scaled generators of $K$. By Proposition \ref{prop: check extremality by minimality}, we have that for any $t \in \supp(u)$, $u$ is not minimal in $S_u(t)$. Hence, 
    \begin{equation}
        u = \bigoplus_{t \in \supp(u)} w_t
    \end{equation}
    where $w_t$ is the minimal element in $S_u(t)$, is a linear combination of $S$. Hence, $u \in \spn(S \setminus \{u\})$. Consequently, we have that $E$ generates $K$. By Lemma \ref{lem: independence}, we have that $E$ is independent. Therefore, $E$ forms a basis for $K$. 
\end{proof}

The \textit{Barycentric coordinate} is an alternative for visualisation when infinite entries exist. The classical triangle coordinate $T \in \R^2$ consists of three vertices $v_1, v_2, v_3$. We denote the coordinates of the three vertices as $v_1 = (1, 0, 0), v_2 = (0, 1, 0), v_3 = (0, 0, 1)$ and the area of $T$ as $A_T$. Suppose we have a point $x$ inside $T$ which splits the triangle into three subareas $A_1, A_2, A_3$, as shown in Figure \ref{Bary}. We define the Barycentric coordinate of $x$ as the ratios of the subareas,
\begin{equation}
	\label{BaryCoor}
	x = \phi_1 v_1 + \phi_2 v_2 + \phi_3 v_3 \quad \text{where} \quad \phi_i = \frac{A_i}{A_T}, i \in \{1, 2, 3\}
\end{equation}
For details of the Barycentric coordinates, we refer the reader to \cite{floater15}. 
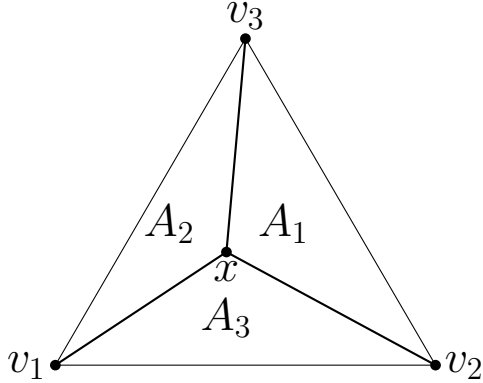
\begin{figure}
\centering
\begin{tikzpicture}[font=\LARGE] 
\def \l{5.00000000000000 } 

\coordinate (A) at (0,0); 
\coordinate (B) at ({\l}, 0);
\coordinate (C) at ({\l / 2 }, {\l * cos(30)});
\coordinate (D) at (\l * 0.45, \l * 0.3);
\coordinate (E) at (\l * 0.6, \l * 0.45);
\coordinate (F) at (\l * 0.3, \l * 0.45);
\coordinate (G) at (\l * 0.45, \l * 0.2);

\fill[black]  (A) circle [radius=2pt]; 
\fill[black]    (B) circle [radius=2pt]; 
\fill[black]    (C) circle [radius=2pt]; 
\fill[black]    (D) circle [radius=2pt];

\draw[black,-] 
                        (B)  --  (A);
\draw[black,-] 
                        (B)  --  (C);
\draw[black,-] 
                        (C)  --  (A);
\draw[black, -, thick] 
                        (B)  --  (D);
\draw[black, -, thick] 
                        (A)  --  (D);
\draw[black, -, thick] 
                        (C)  --  (D);

\draw 
      (A)  node [left]       {$v_1$}
      (B)  node [right]       {$v_2$}
      (C)  node [above]     {$v_3$}
      (D)  node [below]     {$x$}
      (E)  node [below]     {$A_1$}
      (F)  node [below]     {$A_2$}
      (G)  node [below]     {$A_3$};
\end{tikzpicture}
\caption{Barycentric Coordinate}
\label{Bary}
\end{figure}
\begin{notn}
Starting from the section \ref{sec: main proof}, we assume the cone $K$ is defined by the two sided system $C\otimes x = D\otimes x$ in (\ref{eq: two sided system}). We also use $B_{\beta_i}, i \in \{1, 2, 3, 4 ,5 ,6\}$ to denote a $2 \times 2$ matrix $B$ whose entries are 
\begin{equation}
	B =  \begin{bmatrix}
		\beta_{i_1} & \beta_{i_2}\\
		\beta_{i_3} & \beta_{i_4}\\
	\end{bmatrix}
\end{equation}
\end{notn}


\section{Proof of Theorem \ref{thm: main}}
\label{sec: main proof}

In this section we present the proof of Theorem \ref{thm: main} by showing Lemma \ref{lem: a11 > a22} and Lemma \ref{lem: a11 = a22}. The general frame is to first prove that the $\beta$s given in the lemma generates $K$ and then show that each $\beta$ is an extremal in $K$ by verifying its minimality. By Theorem \ref{thm:all scaled extremals form a basis}, we have that these $\beta$s form a basis for $K$. For convenience, we rewrite the two sided system (\ref{eq: two sided complete system}) as the following set of equations. 

\begin{equation}
    \label{eq: two sided complete system rewrite}
    \begin{aligned}
        (A\otimes B)_{11} = \max(a_{11} + b_{11}, a_{12} + b_{21}) &= \max(a_{11} + b_{11}, a_{21} + b_{12}) = (B\otimes A)_{11}\\
		(A\otimes B)_{12} = \max(a_{11} + b_{12}, a_{12} + b_{22}) &= \max(a_{12} + b_{11}, a_{22} + b_{12}) = (B\otimes A)_{12}\\
		(A\otimes B)_{21} = \max(a_{21} + b_{11}, a_{22} + b_{21}) &= \max(a_{11} + b_{21}, a_{21} + b_{22}) = (B\otimes A)_{21}\\
		(A\otimes B)_{22} = \max(a_{21} + b_{12}, a_{22} + b_{22}) &= \max(a_{12} + b_{21}, a_{22} + b_{22}) = (B\otimes A)_{22}
    \end{aligned}
\end{equation}

We assume that a matrix 
\begin{equation}
	B =  \begin{bmatrix}
		\ninf & \ninf\\
		\ninf & \ninf\\
	\end{bmatrix}
\end{equation}
commutes with any matrix $A \in \Rbar^{2\times 2}$ and hence this case will be implicit for the rest of this section. 

\begin{proof}[proof of Lemma \ref{lem: a11 > a22}]
    Assume that $a_{11} > a_{22}$. Then the two sided system (\ref{eq: two sided complete system rewrite}) gets reduced to 
    \begin{equation}
        \label{eq: a11 > a22; first}
        (A\otimes B)_{11} = \max(a_{11} + b_{11}, a_{12} + b_{21}) = \max(a_{11} + b_{11}, a_{21} + b_{12}) = (B\otimes A)_{11}
    \end{equation}
    \begin{equation}
        \label{eq: a11 > a22; second}
        (A\otimes B)_{12} = \max(a_{11} + b_{12}, a_{12} + b_{22}) = a_{12} + b_{11} = (B\otimes A)_{12}
    \end{equation}
    \begin{equation}
        \label{eq: a11 > a22; third}
        (A\otimes B)_{21} = a_{21} + b_{11} = \max(a_{11} + b_{21}, a_{21} + b_{22}) = (B\otimes A)_{21}
    \end{equation}
    \begin{equation}
        \label{eq: a11 > a22; fourth}
        (A\otimes B)_{22} = \max(a_{21} + b_{12}, a_{22} + b_{22}) = \max(a_{12} + b_{21}, a_{22} + b_{22}) = (B\otimes A)_{22}
    \end{equation}
    Let $b = (b_{11}, b_{12}, b_{21}, b_{22}) \in K$. Then it must be the case that $b_{11} > 0$, or $b = (\ninf, \ninf, \ninf, \ninf).$ Let $S = \{\beta_i: i = 1,2,3,4\}$. We first show that $K \subseteq \spn(S)$ by two cases.
    \begin{itemize}
        \item Suppose $a_{21} + b_{12} = a_{12} + b_{21}.$
        In this case we have that both (\ref{eq: a11 > a22; first}) and (\ref{eq: a11 > a22; fourth}) vacuously hold. If $b_{11} = b_{22}$, then (\ref{eq: a11 > a22; second}) and (\ref{eq: a11 > a22; third}) imply
        \begin{equation*}
            b_{12} \le a_{12} - a_{11} + b_{11} \quad \textup{and} \quad b_{21} \le a_{21} - a_{11} + b_{11}
        \end{equation*}
        If $b_{11} \ne b_{22}$, then we must have $b_{11} > b_{22}$; otherwise $a_{12} + b_{22} > a_{12} + b_{11}$, which fails equation (\ref{eq: a11 > a22; second}); moreover, (\ref{eq: a11 > a22; second}) and (\ref{eq: a11 > a22; third}) imply
        \begin{equation*}
            b_{12} = a_{12} - a_{11} + b_{11} \quad \textup{and} \quad b_{21} = a_{21} - a_{11} + b_{11}
        \end{equation*}
        Hence, any $b \in K$ with $a_{21} + b_{12} = a_{12} + b_{21}$ can be written as
        \begin{equation}
            b = (\lambda_1 \otimes \begin{bmatrix}
    		    0 \\
    		    \ninf\\
    		    \ninf\\
    		    0
    	        \end{bmatrix}) \oplus 
    	        (\lambda_2 \otimes \begin{bmatrix}
    		    a_{11} \\
    		    a_{12}\\
    		    a_{21}\\
    		    \ninf
    	        \end{bmatrix})
        \end{equation}
        for some $\lambda_1, \lambda_2 \in \Rbar$.
        \item Suppose $a_{21} + b_{12} \ne a_{12} + b_{21}.$
        Then it must be the case that $b_{22} = b_{11}$. To see this, we first note that to satisfy (\ref{eq: a11 > a22; second}), it cannot be the case that $b_{22} > b_{11}$. If $b_{22} < b_{11}$, then (\ref{eq: a11 > a22; second}) and (\ref{eq: a11 > a22; third}) get reduced to
        \begin{equation*}
            a_{11} + b_{12} = a_{12} + b_{11} \quad \textup{and} \quad a_{21} + b_{11} = a_{11} + b_{21}
        \end{equation*}
        which leads to $a_{21} + b_{12} = a_{12} + b_{21}$. Now, for (\ref{eq: a11 > a22; first}) and (\ref{eq: a11 > a22; fourth}) to hold, because $a_{22} < a_{11},$ we must have
        \begin{equation*}
            b_{12} \le a_{22} - a_{21} + b_{22} = a_{22} - a_{21} + b_{11} \quad \textup{and} \quad b_{21} \le a_{22} - a_{12} + b_{22} = a_{22} - a_{12} + b_{11}
        \end{equation*}
        By (\ref{eq: a11 > a22; second}) and (\ref{eq: a11 > a22; third}), we also require
        \begin{equation*}
            b_{12} \le a_{12} - a_{11} + b_{11} \quad \textup{and} \quad b_{21} \le a_{21} - a_{11} + b_{11}
        \end{equation*}
        Hence, we have that any $b \in K$ with $a_{21} + b_{12} \ne a_{12} + b_{21}$ can be written in the form
        \begin{equation*}
            b = (\lambda_1 \otimes \begin{bmatrix}
    		    0 \\
    		    \ninf\\
    		    \ninf\\
    		    0
    	        \end{bmatrix}) \oplus 
    	        (\lambda_2 \otimes \begin{bmatrix}
    		    0 \\
    		    \alpha_1\\
    		    \ninf\\
    		    0
    	        \end{bmatrix}) \oplus 
    	        (\lambda_3 \otimes \begin{bmatrix}
    		    0 \\
    		    \ninf\\
    		    \alpha_2\\
    		    0
    	        \end{bmatrix})
        \end{equation*}
        for some $\lambda_1, \lambda_2, \lambda_3 \in \Rbar$. Recall that  $\alpha_1 = \min(a_{12} - a_{11}, a_{22} - a_{21}), \alpha_2 = \min(a_{21} - a_{11}, a_{22} - a_{12})$.
    \end{itemize}
    Conversely, for $i \in \{1,2,3\}$,
    \begin{equation*}
            A \otimes B_{\beta_i} = \begin{bmatrix}
        		a_{11} & a_{12}\\
        		a_{21} & a_{22}\\
        	\end{bmatrix}
        	= B_{\beta_i} \otimes A
    \end{equation*}
    and 
    \begin{equation*}
            B_{\beta_4} \otimes A = \begin{bmatrix}
        		(a_{11} \otimes a_{11})\otimes (a_{12} \oplus a_{21}) & a_{11} \otimes a_{12}\\
        		a_{11} \otimes a_{21} &a_{12} \otimes a_{21}\\
        	\end{bmatrix} = B_{\beta_4} \otimes A
    \end{equation*}
    This means all such $b \in \spn(S)$ gives a matrix that commutes with $A$. Hence $\spn(S) = K$ and thus $S$ generates $K$.
    
    It remains to prove that all elements in $S$ are extremals in $K$. By Proposition \ref{prop: check extremality by minimality}, it suffices to show each $\beta_i$ is minimal in $S_{\beta_i}(t)$ for some $t \in \{1,2,3,4\}$. This is obvious because $\beta_4$ is not comparable with any other $\beta$s, while $S_{\beta_2}(2)$ and $S_{\beta_3}(3)$ have only 1 element remaining after removing $\beta_4$; clearly, if we scale $\beta_2$ and $\beta_3$, then $\beta_1$ is smaller than both. Hence, by Lemma \ref{lem: extremals are subset of generators} and Theorem \ref{thm:all scaled extremals form a basis}, $S$ is a basis for $K$.
\end{proof}

Note that the proof of Lemma \ref{lem: a11 < a22} can be proved similarly by symmetry. Now we show the proof of Lemma \ref{lem: a11 = a22}, which is very similar to the proof of Lemma \ref{lem: a11 > a22} we just saw.
\begin{proof}[proof of Lemma \ref{lem: a11 = a22}]
    Assume that $a_{11} = a_{22}$. Then the two sided system (\ref{eq: two sided complete system rewrite}) can be written as 
    \begin{equation}
        \label{eq: a11 = a22; first}
        (A\otimes B)_{11} = \max(a_{11} + b_{11}, a_{12} + b_{21}) = \max(a_{11} + b_{11}, a_{21} + b_{12}) = (B\otimes A)_{11}
    \end{equation}
    \begin{equation}
        \label{eq: a11 = a22; second}
        (A\otimes B)_{12} = \max(a_{11} + b_{12}, a_{12} + b_{22}) = \max(a_{12} + b_{11}, a_{11} + b_{12}) = (B\otimes A)_{12}
    \end{equation}
    \begin{equation}
        \label{eq: a11 = a22; third}
        (A\otimes B)_{21} = \max(a_{21} + b_{11}, a_{11} + b_{21}) = \max(a_{11} + b_{21}, a_{21} + b_{22}) = (B\otimes A)_{21}
    \end{equation}
    \begin{equation}
        \label{eq: a11 = a22; fourth}
        (A\otimes B)_{22} = \max(a_{21} + b_{12}, a_{11} + b_{22}) = \max(a_{12} + b_{21}, a_{11} + b_{22}) = (B\otimes A)_{22}
    \end{equation}
    Let $b = (b_{11}, b_{12}, b_{21}, b_{22}) \in K$ and $S = \{\beta_i: i = 1,2,3,4,5,6\}$. We first show that $K \subseteq \spn(S)$ by two cases.
    \begin{itemize}
        \item Suppose $a_{21} + b_{12} = a_{12} + b_{21}$.
        In this case (\ref{eq: a11 = a22; first}) and (\ref{eq: a11 = a22; fourth}) hold trivially. Notice that $b_{12}$ and $b_{21}$ can both be $\ninf$. If $b_{11} = b_{22}$, even when both of them are $\ninf$, (\ref{eq: a11 = a22; second}) and (\ref{eq: a11 = a22; third}) also hold trivially. 
        Suppose $b_{11} \ne b_{22}$. If $b_{11} > b_{22}$, then (\ref{eq: a11 = a22; second}) and (\ref{eq: a11 = a22; third}) imply
        \begin{equation*}
            b_{22} < b_{11} \le b_{12} - a_{12} + a_{11} \quad \textup{and} \quad b_{22} < b_{11} \le b_{21} - a_{21} + a_{11}.
        \end{equation*}
        On the other hand, if $b_{11} < b_{22}$, then (\ref{eq: a11 = a22; second}) and (\ref{eq: a11 = a22; third}) imply
        \begin{equation*}
            b_{11} < b_{22} \le b_{12} - a_{12} + a_{11} \quad \textup{and} \quad b_{11} < b_{22} \le b_{21} - a_{21} + a_{11}.
        \end{equation*}
        Hence, any $b \in K$ with $a_{21} + b_{12} = a_{12} + b_{21}$ can be written in the form
        \begin{equation}
            b = (\lambda_1 \otimes \begin{bmatrix}
    		    0 \\
    		    \ninf\\
    		    \ninf\\
    		    0
    	        \end{bmatrix}) \oplus 
    	        (\lambda_2 \otimes \begin{bmatrix}
    		    a_{11} \\
    		    a_{12}\\
    		    a_{21}\\
    		    \ninf
    	        \end{bmatrix})\oplus 
    	        (\lambda_3 \otimes \begin{bmatrix}
    		    \ninf \\
    		    a_{12}\\
    		    a_{21}\\
    		    a_{22}
    	        \end{bmatrix})\oplus 
    	        (\lambda_2 \otimes \begin{bmatrix}
    		    \ninf \\
    		    a_{12}\\
    		    a_{21}\\
    		    \ninf
    	        \end{bmatrix})
        \end{equation}
        for some $\lambda_1, \lambda_2, \lambda_3, \lambda_4 \in \Rbar$.
        \item Suppose $a_{21} + b_{12} \ne a_{12} + b_{21}.$
        For (\ref{eq: a11 = a22; first})  to hold, we must have
        \begin{equation*}
            b_{12} \le a_{11} - a_{21} + b_{11} \quad \textup{and} \quad b_{21} \le a_{11} - a_{12} + b_{11}
        \end{equation*}
        and (\ref{eq: a11 = a22; fourth}) requires
        \begin{equation*}
            b_{12} \le a_{11} - a_{21} + b_{22} \quad \textup{and} \quad b_{21} \le a_{11} - a_{12} + b_{22}
        \end{equation*}
        If $b_{11} = b_{22}$, then (\ref{eq: a11 > a22; second}) and (\ref{eq: a11 > a22; third}) hold trivially. Thus any $b \in K$ with $a_{21} + b_{12} \ne a_{12} + b_{21}$ and $b_{11} = b_{22}$ can be written in the form
        \begin{equation}
            b = (\lambda_1 \otimes \begin{bmatrix}
    		    0 \\
    		    \ninf\\
    		    \ninf\\
    		    0
    	        \end{bmatrix}) \oplus 
    	        (\lambda_2 \otimes \begin{bmatrix}
    		    a_{21} \\
    		    a_{11}\\
    		    \ninf\\
    		    a_{21}
    	        \end{bmatrix}) \oplus 
    	        (\lambda_3 \otimes \begin{bmatrix}
    		    a_{12} \\
    		    \ninf\\
    		    a_{11}\\
    		    a_{12}
    	        \end{bmatrix})
        \end{equation}
        for some $\lambda_1, \lambda_2, \lambda_3 \in \Rbar$. 
        Suppose $b_{11} \ne b_{22}$. 
        If $b_{11} > b_{22}$, by (\ref{eq: a11 > a22; second}) and (\ref{eq: a11 > a22; third}), 
        \begin{equation}
            b_{12} \ge a_{12} - a_{11} + b_{11} \quad \textup{and} \quad b_{21} \ge a_{21} - a_{11} + b_{11}
        \end{equation}
        If $b_{11} < b_{22}$, by (\ref{eq: a11 > a22; second}) and (\ref{eq: a11 > a22; third}), 
        \begin{equation}
            b_{12} \ge a_{12} - a_{11} + b_{22} \quad \textup{and} \quad b_{21} \ge a_{21} - a_{11} + b_{22}
        \end{equation}
        We stress that $(a_{11} \otimes a_{11}) \ge (a_{12} \otimes a_{21})$ must hold for $b_{11} \ne b_{22}$ to be possible. Hence, given that $(a_{11} \otimes a_{11}) \ge (a_{12} + a_{21})$, we have that any $b \in K$ with $a_{21} + b_{12} \ne a_{12} + b_{21}$ can be written in the form
        \begin{equation}
            b = (\lambda_1 \otimes \begin{bmatrix}
    		    0 \\
    		    a_{11} - a_{21}\\
    		    \ninf\\
    		    0
    	        \end{bmatrix}) \oplus 
    	        (\lambda_2 \otimes \begin{bmatrix}
    		    0 \\
    		    \ninf\\
    		    a_{11} - a_{12}\\
    		    0
    	        \end{bmatrix})\oplus 
    	        (\lambda_3 \otimes \begin{bmatrix}
    		    0 \\
    		    a_{12} - a_{11}\\
    		    a_{21} - a_{11}\\
    		    \ninf
    	        \end{bmatrix})\oplus 
    	        (\lambda_4 \otimes \begin{bmatrix}
    		    \ninf\\
    		    a_{12} - a_{22}\\
    		    a_{21} - a_{22}\\
    		    0
    	        \end{bmatrix})
        \end{equation}
        for some $\lambda_1, \lambda_2, \lambda_3, \lambda_4 \in \Rbar$. Equivalently, $b$ is a linear combination of $\{\beta_2, \beta_3, \beta_4, \beta_5\}$.
    \end{itemize}
    Conversely,
    \begin{equation*}
        \begin{split}
            A \otimes B_{\beta_1} &= \begin{bmatrix}
        		a_{11} & a_{12}\\
        		a_{21} & a_{22}\\
        	\end{bmatrix}
        	= B_{\beta_1} \otimes A\\
        	A \otimes B_{\beta_2} &= \begin{bmatrix}
        		a_{11} \otimes a_{21} & (a_{11}\otimes a_{11})\oplus(a_{12} \otimes a_{21})\\
        		a_{21} \otimes a_{21} & a_{11} \otimes a_{21}\\
        	\end{bmatrix}
        	= B_{\beta_2} \otimes A\\
        	A \otimes B_{\beta_3} &= \begin{bmatrix}
        		a_{11} \otimes a_{12} & a_{12} \otimes a_{12}\\
        		(a_{11}\otimes a_{11})\oplus(a_{12} \otimes a_{21}) & a_{11} \otimes a_{12}\\
        	\end{bmatrix}
        	= B_{\beta_3} \otimes A\\
            A \otimes B_{\beta_4} &= \begin{bmatrix}
        		(a_{11} \otimes a_{11})\otimes (a_{12} \oplus a_{21}) & a_{11} \otimes a_{12}\\
        		a_{11} \otimes a_{21} &a_{12} \otimes a_{21}\\
        	\end{bmatrix} = B_{\beta_4} \otimes A\\
        	A \otimes B_{\beta_5} &= \begin{bmatrix}
        		a_{12} \otimes a_{21} & a_{11} \otimes a_{12}\\
        		a_{11} \otimes a_{21} &(a_{11} \otimes a_{11})\otimes (a_{12} \oplus a_{21})\\
        	\end{bmatrix} = B_{\beta_5} \otimes A\\
        	A \otimes B_{\beta_6} &= \begin{bmatrix}
        		a_{12} \otimes a_{21} & a_{11} \otimes a_{12}\\
        		a_{11} \otimes a_{21} &a_{12} \otimes a_{21}\\
        	\end{bmatrix} = B_{\beta_6} \otimes A
        \end{split}
    \end{equation*}
    This means all such $b \in \spn(S)$ gives a matrix that commutes with $A$. Hence $\spn(S) = K$ and $S$ generates $K$.
    
    Now we prove that all elements in $S = \{\beta_i, i = 1,2,3,4,5,6\}$ are extremals in $K$ by showing that each $\beta_i$ is minimal in $S_{\beta_i}(t)$ for some $t \in \{1,2,3,4,5,6\}$. Let $S(t) \subseteq S$ be a subset that includes all elements in $S$ with support at $t$ and define $S_0(t)$ by normalising vectors in $S(t)$ such that their $t$-th coordinates are set to zero:
    \begin{equation}
        S_0(t) := \{(- v_t)\otimes v: v \in S(t)\}.
    \end{equation}
    Notice that if a vector $(- v_t)\otimes v$ is minimal in $S_0(t)$, then $v$ is minimal in $S_v(t)$. Now
    \begin{itemize}
        \item $\beta_1$ and $(- a_{11})\otimes\beta_4$ are minimal in
        \begin{equation*}
            S_{0}(1) = \{\begin{bmatrix}
		    0 \\
		    \ninf\\
		    \ninf\\
		    0
	    \end{bmatrix}, \begin{bmatrix}
		    0 \\
		    a_{11} - a_{21}\\
		    \ninf\\
		    0
	    \end{bmatrix}, \begin{bmatrix}
	        0\\
		    \ninf \\
		    a_{11} - a_{12}\\
		    0
	    \end{bmatrix}, \begin{bmatrix}
		    0 \\
	        a_{12} - a_{11}\\
		    a_{21} - a_{11}\\
		    \ninf
	    \end{bmatrix}\}
        \end{equation*}
        \item $(-a_{11})\otimes\beta_2$ and $(-a_{12}) \otimes \beta_6$ are minimal in
        \begin{equation*}
            S_{0}(2) = \{\begin{bmatrix}
		    a_{21} - a_{11}\\
		    0 \\
		    \ninf\\
		    a_{21} - a_{11}
	    \end{bmatrix}, \begin{bmatrix}
		    a_{11} - a_{12}\\
	        0\\
		    a_{21} - a_{12}\\
		    \ninf
	    \end{bmatrix},\begin{bmatrix}
		    \ninf \\
	        0\\
		    a_{21} - a_{12}\\
		    a_{22} - a_{12}
	    \end{bmatrix},\begin{bmatrix}
		    \ninf \\
	        0\\
		    a_{21} - a_{12}\\
		    \ninf
	    \end{bmatrix}\}
        \end{equation*}
        \item $(-a_{11}) \otimes \beta_3$ is minimal in
        \begin{equation*}
            S_{0}(3) = \{ \begin{bmatrix}
	        a_{12} - a_{11}\\
		    \ninf \\
		    0\\
		    a_{12} - a_{11}
	    \end{bmatrix}, \begin{bmatrix}
		    a_{11} - a_{21} \\
	        a_{12} - a_{21}\\
		    0\\
		    \ninf
	    \end{bmatrix},\begin{bmatrix}
		    \ninf \\
	        a_{12} - a_{21}\\
		    0 \\
		    a_{22} - a_{21}
	    \end{bmatrix},\begin{bmatrix}
		    \ninf \\
	        a_{12} - a_{21}\\
		    0\\
		    \ninf
	    \end{bmatrix}\}
        \end{equation*}
        \item $(- a_{22}) \otimes \beta_5$ is minimal in
        \begin{equation*}
            S_{0}(4) = \{\begin{bmatrix}
		    0 \\
		    \ninf\\
		    \ninf\\
		    0
	    \end{bmatrix}, \begin{bmatrix}
		    0 \\
		    a_{11} - a_{12}\\
		    \ninf\\
		    0
	    \end{bmatrix}, \begin{bmatrix}
	        0\\
		    \ninf \\
		    a_{11} - a_{21}\\
		    0
	    \end{bmatrix}, \begin{bmatrix}
	        \ninf \\
	        a_{12} - a_{22}\\
		    a_{21} - a_{22}\\
		    0
	    \end{bmatrix}\}
        \end{equation*}
    \end{itemize}
    Hence, all $\beta_i, i = 1,2,3,4,5,6$ are extremals in $K$ and form an independent set. Then $S$ is a basis of $K$.
\end{proof}

\begin{rem}
    We emphasise that $\beta_2$ and $\beta_3$ in Lemma \ref{lem: a11 = a22} cannot be replaced by 
    \begin{equation*}
        \beta'_2 = \begin{bmatrix}
		    0 \\
		    \alpha_1\\
		    \ninf\\
		    0
	    \end{bmatrix}
    	\quad
    	\beta'_3 = \begin{bmatrix}
		    0 \\
		    \ninf\\
		    \alpha_2\\
		    0
	    \end{bmatrix}
    \end{equation*}
    where $\alpha_1 = \min(a_{12} - a_{11}, a_{11} - a_{21}), \alpha_2 = \min(a_{21} - a_{11}, a_{11} - a_{12})$, as in Lemma \ref{lem: a11 < a22}. Suppose, for example, $(a_{11} \otimes a_{11}) > (a_{12} \otimes a_{21})$ and $\beta_2$ and $\beta_3$ are replaced by $\beta'_2$ and $\beta'_3$ in $S$. Then $a_{12} - a_{11} < a_{11} - a_{12}$, as well as $a_{21} - a_{11} < a_{11} - a_{12}$. Then
    \begin{equation*}
        \beta'_2 = \begin{bmatrix}
		    0 \\
		    a_{12} - a_{11}\\
		    \ninf\\
		    0
	    \end{bmatrix}
    	\quad
    	\beta'_3 = \begin{bmatrix}
		    0 \\
		    \ninf\\
		    a_{21} - a_{11}\\
		    0
	    \end{bmatrix}
    \end{equation*}
    The vector 
    \begin{equation*}
        b = \begin{bmatrix}
		    0\\
		    a_{11} - a_{21}\\
		    a_{21} - a_{11}\\
		    0
	    \end{bmatrix}
    \end{equation*}
    cannot be a linear combination of $S$ but 
    \begin{equation*}
            A \otimes B_{b} = \begin{bmatrix}
        		a_{11} & a_{11} + a_{11} - a_{21}\\
        		a_{21} & a_{11}\\
        	\end{bmatrix}
        	= B_{b} \otimes A
    \end{equation*}
\end{rem}
\section{Geometric Representation of the Commuting Polyhedral Cone}\footnote{A set of R codes which visualises the Barycentric coordinates of the commuting tropical polyhedral cone given a finite $2 \times 2$ matrix $A$ (as example \ref{eg1})is available on the public GitHub repository \url{https://github.com/Xieyangxinyu/On-2-by-2-Tropical-Commuting-Matrices}.}
\label{geo}
Barycentric coordinates give a convenient approach to visualise the four extremals of $\beta$s when $a_{11} > a_{22}$ or $a_{11} < a_{22}$. When $a_{11} > a_{22}$, $b_{11}$ is zero in four different scaled extremals. Hence, we can achieve a nice visualisation in $\Rbar^3$ as a projection of the polyhedral cone onto the three other axes $(b_{12}, b_{21}, b_{22})$. To avoid the burden of negative infinity, we use the exponential transformations of the original entries as the coordinates in the Barycentric triangle after normalisation. That is, for a given set of scaled $\beta$s,
	\begin{equation}
	\label{extreme2}
		\begin{split}
			\beta_1 &= (0, \ninf, \ninf, 0)\\
			\beta_2 &= (0, \alpha_1, \ninf, 0)\\
			\beta_3 &= (0, \ninf, \alpha_2, 0)\\
			\beta_4 &= (0, a_{12} - a_{11}, a_{21} - a_{11}, \ninf)
		\end{split}
	\end{equation}
the corresponding barycentric coordinates are
	\begin{equation}
		\begin{split}
			\beta'_1 &= (0, 0, 1)\\
			\beta'_2 &= (\frac{\exp(\alpha_1)}{\exp(\alpha_1) + 1}, 0, \frac{1}{\exp(\alpha_1) + 1})\\
			\beta'_3 &= (0, \frac{\exp(\alpha_2)}{\exp(\alpha_2) + 1}, \frac{1}{\exp(\alpha_2) + 1})\\
			\beta'_4 &= (\frac{\exp(a_{12} - a_{11})}{\exp(a_{12} - a_{11}) + \exp(a_{21} - a_{11})}, \frac{\exp(a_{21} - a_{11})}{\exp(a_{12} - a_{11}) + \exp(a_{21} - a_{11})}, 0)
		\end{split}
	\end{equation}
One typical representation of the projection of the tropical polyhedral cone generated by $\beta$s in (\ref{extreme2}) is composed of the shaded areas and the thick line segments shown on the left hand side of Figure \ref{proj}. In the rest of this section, we prove that the three line segments $\beta_1\beta_4, b_{21}\beta_2, b_{12}\beta_3$ intersect at the same point $\omega$, as shown on the right hand side of Figure \ref{proj}.

\begin{figure}
\centering
\begin{tikzpicture}[font=\LARGE] 

\def \l{     5.50000000000000 } 

\coordinate (A) at (0,0); 
\coordinate (B) at ({\l}, 0);
\coordinate (C) at ({\l / 2 }, {\l * cos(30)});
\coordinate (D) at ({0.2749170 * \l}, {0.4761702 * \l });
\coordinate (E) at ({0.5249792 *\l}, 0);
\coordinate (F) at ({0.7375104*\l}, {(0.4546453 * \l});
\coordinate (I) at (0.5158077*\l, 0.3179746*\l);

\fill[black]  (A) circle [radius=2pt]; 
\fill[black]    (B) circle [radius=2pt]; 
\fill[black]    (C) circle [radius=2pt]; 
\fill[black]    (D) circle [radius=2pt]; 
\fill[black]    (E) circle [radius=2pt]; 
\fill[black]    (F) circle [radius=2pt]; 

\filldraw[draw=gray,bottom color=gray!40, top color=gray!40]
  (C) -- (D) -- (I) -- (F);
  
\draw[black,-] 
                        (B)  --  (A);
\draw[black,-] 
                        (B)  --  (C);
\draw[black,-] 
                        (C)  --  (A);
\draw[black, dotted, -, thick] 
                        (B)  --  (D);
\draw[black, dotted, -, thick] 
                        (C)  --  (E);
\draw[black, dotted, -, thick] 
                        (A)  --  (F);
\draw[black,-, very thick] 
                        (I)  --  (E);
\draw[black,-, very thick] 
                        (I)  --  (D);
\draw[black,-, very thick] 
                        (I)  --  (F);
\draw[black,-, very thick] 
                        (I)  --  (E);
\draw[black,-, very thick] 
                        (C)  --  (D);
\draw[black,-, very thick] 
                        (C)  --  (F);

\draw 
      (A)  node [left,gray]       {$b_{12}$}
      (B)  node [right,gray]       {$b_{21}$}
      (C)  node [above ,gray]     {$b_{22}$}
      (C)  node [right ,black]     {$\beta_1$}
      (D)  node [above, black]     {$\beta_2$}
      (E)  node [below,black]     {$\beta_4$}
      (F)  node [below,black]     {$\beta_3$};
\end{tikzpicture}
\qquad
\begin{tikzpicture}[font=\LARGE] 
\def \l{     5.50000000000000 } 

\coordinate (A) at (0,0); 
\coordinate (B) at ({\l}, 0);
\coordinate (C) at ({\l / 2 }, {\l * cos(30)});
\coordinate (D) at ({0.2749170 * \l}, {0.4761702 * \l });
\coordinate (E) at ({0.5249792 *\l}, 0);
\coordinate (F) at ({0.7375104*\l}, {(0.4546453 * \l});
\coordinate (I) at (0.5158077*\l, 0.3179746*\l);
\coordinate (A_1) at (0.37 * \l,0.3 * \l); 
\coordinate (A_2) at (0.4 * \l, 0.1 * \l);
\coordinate (A_3) at (0.7 * \l, 0.1 * \l);
\coordinate (A_4) at (0.8 * \l, 0.35 * \l);
\coordinate (A_5) at (0.68 * \l, 0.5 * \l);
\coordinate (A_6) at (0.47 * \l, 0.5 * \l);

\fill[black]    (A) circle [radius=2pt]; 
\fill[black]    (B) circle [radius=2pt]; 
\fill[black]    (C) circle [radius=2pt]; 
\fill[black]    (D) circle [radius=2pt]; 
\fill[black]    (E) circle [radius=2pt]; 
\fill[black]    (F) circle [radius=2pt]; 
\fill[black]    (I) circle [radius=2pt];

\draw[black,-] 
                        (B)  --  (A);
\draw[black,-] 
                        (B)  --  (C);
\draw[black,-] 
                        (C)  --  (A);
\draw[black, dotted, -, thick] 
                        (B)  --  (D);
\draw[black, dotted, -, thick] 
                        (C)  --  (E);
\draw[black, dotted, -, thick] 
                        (A)  --  (F);

\draw 
      (A)  node [left,gray]       {$b_{12}$}
      (B)  node [right,gray]       {$b_{21}$}
      (C)  node [above ,gray]     {$b_{22}$}
      (C)  node [right ,black]     {$\beta_1$}
      (D)  node [above, black]     {$\beta_2$}
      (E)  node [below,black]     {$\omega'$}
      (F)  node [above,black]     {$\beta_3$}
      (I)  node [left,black]     {$\omega$}
      (A_1)  node [left,black]       {$A_1$}
      (A_2)  node [left,black]       {$A_2$}
      (A_3)  node [left,black]       {$A_3$}
      (A_4)  node [left,black]       {$A_4$}
      (A_5)  node [left,black]       {$A_5$}
      (A_6)  node [left,black]       {$A_6$};
\end{tikzpicture}
\caption{The projection of a tropical polyhedral cone}
\label{proj}
\end{figure}
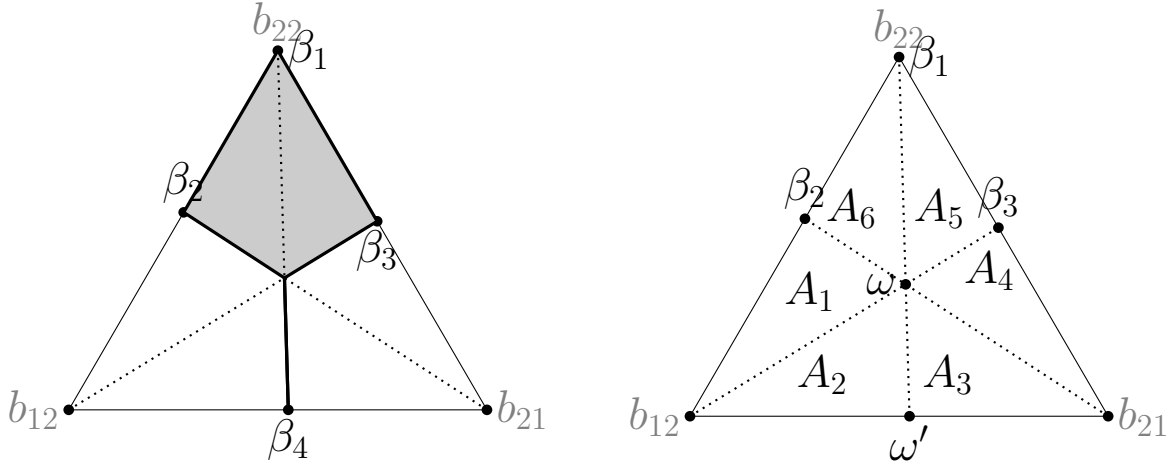

\begin{lem}
	\label{hint}
	$\alpha_2 = \alpha_1 \otimes (a_{21} - a_{12})$.
\end{lem}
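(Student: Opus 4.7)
The plan is to unpack the tropical multiplication and then apply the standard identity $\min(x,y)+c=\min(x+c,y+c)$ for any $c\in\mathbb{R}$. Rewriting the right-hand side in ordinary arithmetic, we have
\[
\alpha_1 \otimes (a_{21}-a_{12}) \;=\; \min(a_{12}-a_{11},\, a_{22}-a_{21}) \;+\; (a_{21}-a_{12}).
\]
Distributing the constant $a_{21}-a_{12}$ into the $\min$ yields
\[
\min\bigl((a_{12}-a_{11})+(a_{21}-a_{12}),\; (a_{22}-a_{21})+(a_{21}-a_{12})\bigr) \;=\; \min(a_{21}-a_{11},\, a_{22}-a_{12}),
\]
which is exactly the definition of $\alpha_2$. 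So the proof is one short calculation; there is no real obstacle, it is essentially a telescoping cancellation in each of the two arguments of the $\min$.

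I would structure the write-up as two displayed equalities (substitute the definition of $\alpha_1$; then distribute the scalar and simplify), followed by the observation that the resulting expression matches the definition of $\alpha_2$. No case analysis is needed because the identity holds entrywise before the $\min$ is taken.
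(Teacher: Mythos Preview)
Your proposal is correct and matches the paper's own argument essentially verbatim: the paper simply adds $a_{21}-a_{12}$ to $\alpha_1$, distributes into the $\min$, and reads off $\alpha_2$. There is nothing to add.
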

\begin{proof}
	By adding $(a_{21} - a_{12})$ to $\alpha_1$, we obtain $\alpha_1 \otimes (a_{21} - a_{12}) = \min(a_{21} - a_{11}, a_{22} - a_{12}) = \alpha_2$.
\end{proof}

\begin{figure}
\centering
\begin{tikzpicture}[font=\LARGE] 
\def \l{     5.00000000000000 } 

\coordinate (A) at (0,0); 
\coordinate (B) at ({\l}, 0);
\coordinate (C) at ({\l / 2 }, {\l * cos(30)});
\coordinate (D) at ({0.2674715 * \l}, {0.4632742 * \l });
\coordinate (E) at ({0.1852764 *\l}, 0);
\coordinate (F) at ({0.5825335 *\l}, {(0.7230732 * \l});
\coordinate (I) at (0.3375331*\l, 0.4189650*\l);

\fill[black]  (A) circle [radius=2pt]; 
\fill[black]    (B) circle [radius=2pt]; 
\fill[black]    (C) circle [radius=2pt]; 
\fill[black]    (D) circle [radius=2pt]; 
\fill[black]    (E) circle [radius=2pt]; 
\fill[black]    (F) circle [radius=2pt]; 

\filldraw[draw=gray,bottom color=gray!40, top color=gray!40]
  (C) -- (D) -- (I) -- (F);
  
\draw[black,-] 
                        (B)  --  (A);
\draw[black,-] 
                        (B)  --  (C);
\draw[black,-] 
                        (C)  --  (A);
\draw[black, dotted, -, thick] 
                        (B)  --  (D);
\draw[black, dotted, -, thick] 
                        (C)  --  (E);
\draw[black, dotted, -, thick] 
                        (A)  --  (F);
\draw[black,-, very thick] 
                        (I)  --  (E);
\draw[black,-, very thick] 
                        (I)  --  (D);
\draw[black,-, very thick] 
                        (I)  --  (F);
\draw[black,-, very thick] 
                        (I)  --  (E);
\draw[black,-, very thick] 
                        (C)  --  (D);
\draw[black,-, very thick] 
                        (C)  --  (F);

\draw 
      (A)  node [left,gray]       {$b_{12}$}
      (B)  node [right,gray]       {$b_{21}$}
      (C)  node [above ,gray]     {$b_{22}$}
      (C)  node [right ,black]     {$\beta_1$}
      (D)  node [above, black]     {$\beta_2$}
      (E)  node [below,black]     {$\beta_4$}
      (F)  node [below,black]     {$\beta_3$};
\end{tikzpicture}
\label{exam1}
\caption{The tropical polyhedral cone projection for example \ref{eg1}}
\end{figure}
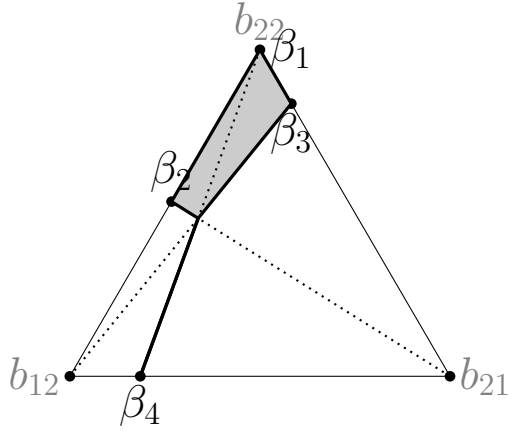

\begin{lem}
	\label{coinc}
	Suppose the two line segments $\beta_2b_{21}$ and $\beta_3b_{12}$ intersect at the point $\omega$. We draw a line from $b_{22}$ to $\omega$ and extend it until it intersects with $b_{12}b_{21}$ at point $\omega'$. Then, $\omega'$ coincides with $\beta_4$.
\end{lem}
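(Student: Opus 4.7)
The plan is to recognise the three segments in question as cevians of the barycentric triangle with vertices $b_{12}, b_{21}, b_{22}$ and then invoke Ceva's theorem. By inspection of the barycentric coordinates, each $\beta_i$ has exactly one vanishing coordinate, so $\beta_2$ lies on the edge $b_{12}b_{22}$, $\beta_3$ on the edge $b_{21}b_{22}$, and $\beta_4$ on the edge $b_{12}b_{21}$. Consequently $\beta_2b_{21}$, $\beta_3b_{12}$, and $b_{22}\beta_4$ are the three cevians from the three vertices to the opposite sides, and concurrence of these three lines (which is exactly the statement $\omega' = \beta_4$) reduces by Ceva's theorem to the verification of a single product identity.

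Next I would read off the three ratios directly from the coordinates. Writing $A := b_{12}$, $B := b_{21}$, $C := b_{22}$ and using that a point $\phi_iV_i + \phi_jV_j$ on the segment $V_iV_j$ splits it in the ratio $\phi_j : \phi_i$, one obtains
\begin{equation*}
    \frac{C\beta_2}{\beta_2 A} = \exp(\alpha_1),\qquad
    \frac{B\beta_3}{\beta_3 C} = \exp(-\alpha_2),\qquad
    \frac{A\beta_4}{\beta_4 B} = \exp(a_{21}-a_{12}).
\end{equation*}
The Ceva product is therefore $\exp\bigl(\alpha_1 - \alpha_2 + (a_{21}-a_{12})\bigr)$, and Lemma \ref{hint} gives precisely $\alpha_2 - \alpha_1 = a_{21}-a_{12}$, so this product equals $1$.

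By Ceva's theorem the three cevians meet in a single interior point, which must be $\omega$, the already-defined intersection of two of them. The cevian from $b_{22}$ through $\omega$ then meets $b_{12}b_{21}$ in a unique point, and by the concurrence that point is $\beta_4$; that is, $\omega' = \beta_4$.

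The only real subtlety is bookkeeping the orientation of the three edge ratios so that they assemble into the correct Ceva product with the correct signs of $\alpha_1, \alpha_2$ and $a_{21}-a_{12}$; once that is done carefully, Lemma \ref{hint} makes the cancellation immediate.
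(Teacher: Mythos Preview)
Your argument is correct. You recognise the configuration as three cevians of the triangle with vertices $b_{12},b_{21},b_{22}$, compute the three edge ratios from the barycentric coordinates, and reduce concurrence to the single identity $\exp(\alpha_1-\alpha_2+a_{21}-a_{12})=1$, which is exactly Lemma~\ref{hint}. The bookkeeping of the ratios and their orientation is right.

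The paper does not invoke Ceva's theorem. Instead it works directly with the six sub-areas $A_1,\dots,A_6$ cut out by the three segments: from the barycentric coordinate of $\beta_2$ it reads $(A_4+A_5)/(A_2+A_3)=\exp(\alpha_1)$, from that of $\beta_3$ it reads $(A_1+A_6)/(A_2+A_3)=\exp(\alpha_2)$, then takes the quotient to obtain the side ratio determining $\omega'$ on $b_{12}b_{21}$, and checks via Lemma~\ref{hint} that this ratio matches the one for $\beta_4$. In effect the paper re-derives, in this one instance, the area-ratio identity that underlies Ceva's theorem. Your approach packages the same computation into a single classical statement, which makes the logical structure clearer and avoids the somewhat implicit step in the paper where equalities such as $(A_4+A_5)/(A_2+A_3)=(A_4+A_5+A_6)/(A_1+A_2+A_3)$ are used without comment. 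The paper's approach, on the other hand, stays self-contained and does not require the reader to recall Ceva. Either way, the substantive content is the identity of Lemma~\ref{hint}.
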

\begin{proof}
	The three line segments in the lemma partition the triangle $T$ into 6 subareas, as shown on the right hand side of Figure \ref{proj}. Using the definition of Barycentric coordinates in (\ref{BaryCoor}), the barycentric coordinate $\beta'_2$ shows that 
	\begin{equation*}
		\frac{A_4 + A_5}{A_2 + A_3} = \frac{A_4 + A_5 + A_6}{A_1 + A_2 + A_3} = \frac{\frac{\exp(\alpha_1)}{\exp(\alpha_1) + 1}}{\frac{1}{\exp(\alpha_1) + 1}} = \frac{\exp(\alpha_1)}{1}
	\end{equation*}
Similarly, barycentric coordinate $\beta'_3$ shows that 
	\begin{equation*}
		\frac{A_1 + A_6}{A_2 + A_3} = \frac{A_1 + A_5 + A_6}{A_2 + A_3 + A_4} = \frac{\frac{\exp(\alpha_2)}{\exp(\alpha_2) + 1}}{\frac{1}{\exp(\alpha_2) + 1}} = \frac{\exp(\alpha_2)}{1}
	\end{equation*}
Hence, the information of $\omega'$ can be obtained by
	\begin{equation*}
		\frac{A_3 + A_4 + A_5}{A_1 + A_2 + A_6} = \frac{A_4 + A_5}{A_1 + A_6} = \frac{\exp(\alpha_1)}{\exp(\alpha_2)}
	\end{equation*}
By Lemma \ref{hint}, we have 
\begin{equation*}
	\frac{\exp(\alpha_1)}{\exp(\alpha_2)} = \frac{1}{\exp(a_{21} - a_{12})} = \frac{\exp(a_{12} - a_{11})}{\exp(a_{21} - a_{11})}
\end{equation*}
Hence, the coordinates of $\omega'$ and $\beta_4$ coincide.
\end{proof}

\begin{thm}
	Given a set of scaled extremals as (\ref{extreme2}), the three line segments $\beta_1\beta_4, b_{21}\beta_2, b_{12}\beta_3$ intersect at the same point.
\end{thm}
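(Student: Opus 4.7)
The plan is to observe that this theorem is essentially a direct reformulation of Lemma \ref{coinc}, so the proof amounts to repackaging what has already been established.

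First, I would note that $\beta_1$, in the barycentric triangle, coincides with the vertex $b_{22}$: the scaled extremal $\beta_1 = (0,\ninf,\ninf,0)$ has finite entries only at the $b_{11}$ and $b_{22}$ coordinates, so after projecting away $b_{11}$ and normalising, $\beta_1$ is the vertex $b_{22}$ of the triangle. Consequently the segment $\beta_1 \beta_4$ is literally the segment $b_{22}\beta_4$.

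Next, I would invoke Lemma \ref{coinc}. Let $\omega$ denote the intersection of the two segments $\beta_2 b_{21}$ and $\beta_3 b_{12}$ (which exists because $\beta_2$ lies on the edge $b_{12}b_{22}$ and $\beta_3$ lies on the edge $b_{21}b_{22}$, so the two cevians from $b_{21}$ and $b_{12}$ must cross inside the triangle). The lemma guarantees that the ray from $b_{22}$ through $\omega$ meets the opposite edge $b_{12}b_{21}$ at precisely the point $\beta_4$. In other words, $b_{22}$, $\omega$, and $\beta_4$ are collinear, which is the same as saying $\omega$ lies on $\beta_1 \beta_4$.

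Thus $\omega \in \beta_1\beta_4 \cap b_{21}\beta_2 \cap b_{12}\beta_3$, proving concurrency. The only conceptual work needed here is the identification $\beta_1 = b_{22}$ in barycentric coordinates; everything else is already done in Lemma \ref{coinc} (whose hard part, in turn, was the ratio computation combined with Lemma \ref{hint}). So there is no real obstacle at this stage -- the theorem is an immediate corollary, and I would present it in one short paragraph.
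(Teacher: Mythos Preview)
Your proposal is correct and follows the same approach as the paper, which simply states that the theorem follows readily from Lemma \ref{coinc}. You have merely made explicit the identification $\beta_1 = b_{22}$ in barycentric coordinates and spelled out why collinearity of $b_{22}$, $\omega$, and $\beta_4$ yields concurrency, which the paper leaves implicit.
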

\begin{proof}
	This follows readily from Lemma \ref{coinc}.
\end{proof}
\begin{exmp}
	\label{eg1}
	Given a finite matrix 
	\begin{equation*}
		A = \begin{bmatrix}
		0.166 & 0.861\\
		-0.62 & -0.76\\
	\end{bmatrix},
	\end{equation*}
	the scaled extremals of the tropical polyhedral cones are
	\begin{equation*}
		\begin{split}
			\beta_1 &= (0, \ninf, \ninf, 0)\\
			\beta_2 &= (0, -0.14, \ninf, 0)\\
			\beta_3 &= (0, \ninf, -1.621, 0)\\
			\beta_4 &= (0, 0.695, -0.786, \ninf)
		\end{split}
	\end{equation*}
	Hence, the coordinates of each point in the Barycentric triangle are thus
	\begin{equation*}
		\begin{split}
			\beta'_1 &= (0, 0, 1)\\
			\beta'_2 &= (\frac{\exp(-0.14)}{\exp(-0.14) + 1}, 0, \frac{1}{\exp(-0.14) + 1})\\
			\beta'_3 &= (0, \frac{\exp(-1.621)}{\exp(-1.621) + 1}, \frac{1}{\exp(-1.621) + 1})\\
			\beta'_4 &= (\frac{\exp(0.695)}{\exp(0.695) + \exp(-0.786)}, \frac{\exp(-0.786)}{\exp(0.695) + \exp(-0.786)}, 0)
		\end{split}
	\end{equation*}
\end{exmp}

\section*{Acknowledgement}
Many thanks to Dr Ngoc M. Tran for guidance throughout this project. Also thanks to George D. Torres and Luyan Yu for very helpful insights in tropical polyhedral cones and  Yaoyang Liu for comments on earlier drafts.

\bibliography{mybibfile}

\end{document}